\newtheorem{theorem}{Theorem}[section]
\newtheorem{lemma}[theorem]{Lemma}
\newtheorem{proposition}[theorem]{Proposition}
\newtheorem{claim}{Claim}
\newtheorem{conjecture}[theorem]{Conjecture}
\newtheorem{remark}{Remark}
\newtheorem{corollary}[theorem]{Corollary}
\newtheorem{observation}[theorem]{Observation}
\Crefname{claim}{Claim}{Claims}
\Crefname{conjecture}{Conjecture}{Conjectures}
\Crefname{observation}{Observation}{Observations}
\Crefname{step}{Step}{Steps}
\Crefname{substep}{}{}
\tikzset{
  v:main/.style = {draw, circle, scale=0.8, thick,fill=black,inner sep=0.7mm},
  v:mainempty/.style = {draw, circle, scale=0.8, thick,fill=white,inner sep=0.7mm},
  v:middle/.style = {draw, circle, scale=0.3,thick,fill=Gray,color=Gray,inner sep=1mm},
  v:border/.style = {draw, circle, scale=0.75, thick,minimum size=10.5mm},
  v:mainfull/.style = {draw, circle, scale=1, thick,fill},
  v:ghost/.style = {inner sep=0pt,scale=1},
  v:empty/.style = {draw=none,fill=none},
  >={latex},
  e:shiftedright/.style = {decoration={sl, raise=0.65pt},  decorate},
  e:shiftedleft/.style  = {decoration={sl, raise=-0.65pt}, decorate},
  e:marker/.style = {line width=8.5pt,line cap=round,opacity=0.35,color=DarkGoldenrod},
  e:colored/.style = {line width=1.8pt,color=BostonUniversityRed,cap=round,opacity=0.8},
  e:coloredthin/.style = {line width=1.6pt,opacity=1},
  e:coloredborder/.style = {line width=3.4pt},
  e:main/.style = {line width=1pt},
  e:thick/.style = {line width=2pt},
  e:mainthin/.style = {line width=0.6pt},
}
\newenvironment{proofclaim}{
	\noindent \emph{Proof of claim.}
}{%
	\hfill $\diamond$ \\
}
\newcommand{\tw}{\mathrm{tw}}
\newcommand{\tin}{\mathrm{tree}\text{-}\alpha}
\newcommand{\X}{\mathbb{X}}
\newcommand{\T}{\mathcal{T}}
\newcommand{\NP}{\textsf{NP}}
\newcommand\extrafootertext[1]{%
    \bgroup
    \renewcommand\thefootnote{\fnsymbol{footnote}}%
    \renewcommand\thempfootnote{\fnsymbol{mpfootnote}}%
    \footnotetext[0]{#1}%
    \egroup
}
\date{}
\title{Induced Minor Models. I. Structural Properties and Algorithmic Consequences\thanks{Parts of this work appeared in an extended abstract published in the proceedings of the  35th International Workshop on Combinatorial Algorithms (IWOCA 2024),  Springer, 2024, pp.~151--164.  %
  This work was funded by the French \emph{Fédération de Recherche ICVL} (Informatique Centre-Val de Loire),
  the Slovenian Research and Innovation Agency (I0-0035, research program P1-0285 and research projects J1-3001, J1-3002, J1-3003, J1-4008, J1-4084, and N1-0370),
  and the research program CogniCom (0013103) at the University of Primorska.
  The last author is supported by Projet ANR GODASse, Projet-ANR-24-CE48-4377}}
\author[1]{Nicolas Bousquet}
\author[2]{Clément Dallard}
\author[3]{Maël Dumas}
\author[4]{Claire Hilaire}
\author[4]{Martin Milani\v{c}}
\author[5]{Anthony Perez}
\author[6]{Nicolas Trotignon}
\newcommand{\email}[1]{
  \texttt{#1}
}
\affil[1]{CNRS, Université de Montréal CRM – CNRS, Canada\protect\\\email{nicolas.bousquet@cnrs.fr}}
\affil[2]{Department of Informatics, University of Fribourg, Switzerland\protect\\\email{clement.dallard@unifr.ch}}
\affil[3]{Institute of Informatics, University of Warsaw, Poland\protect\\\email{mdumas@mimuw.edu.pl}}
\affil[4]{FAMNIT and IAM, University of Primorska, Slovenia\protect\\\email{\{claire.hilaire,martin.milanic\}@upr.si}}
\affil[5]{Université d'Orléans, INSA CVL, LIFO UR 4022, France\protect\\\email{anthony.perez@univ-orleans.fr}}
\affil[6]{CNRS, ENS de Lyon, Université Claude Bernard Lyon 1, LIP, UMR 5668, France\protect\\\email{nicolas.trotignon@ens-lyon.fr}}
\def\@maketitle{%
\newpage%
\null%
\begin{center}%
    \let\footnote\thanks %
    {\huge\bfseries \@title %
      \par
    }
  \vskip 1.5em
    {\lineskip .5em
     \begin{tabular}[t]{c}
        \baselineskip=12pt
        \@author
     \end{tabular}
     \par
    }
\end{center}
\par
\vskip 1.5em}
\begin{document}

    \maketitle

  \begin{abstract}
    A graph $H$ is said to be an \emph{induced minor} of a graph $G$ if $H$ can be obtained from $G$ by a sequence of vertex deletions and edge contractions.
    Equivalently, $H$ is an induced minor of $G$ if there exists an \emph{induced minor model} of $H$ in $G$, that is, a collection of pairwise disjoint subsets of vertices of $G$ labeled by the vertices of $H$, each inducing a connected subgraph in $G$, such that two vertices of $H$ are adjacent if and only if there is an edge in $G$ between the corresponding subsets.

    In this paper, we investigate structural properties of induced minor models, including bounds on treewidth and chromatic number of the subgraphs induced by minimal induced minor models.
    It is known that for some graphs $H$, testing whether a given graph $G$ contains $H$ as an induced minor is an \textsf{NP}-complete problem.
    Nevertheless, as algorithmic applications of our structural results, we make use of recent developments regarding tree-independence number to show that if $H$ is the $4$-wheel, the $5$-vertex complete graph minus an edge, or a complete bipartite graph $K_{2,q}$, then there is a polynomial-time algorithm to find in a given graph $G$ an induced minor model of $H$ in $G$, if there is one.
    We also develop an alternative polynomial-time algorithm for recognizing graphs that do not contain $K_{2,3}$ as an induced minor, which revolves around the idea of detecting the induced subgraphs whose presence is forced when the input graph contains $K_{2,3}$ as an induced minor, using the so-called shortest path detector.
    It turns out that all these induced subgraphs are Truemper configurations.
  \end{abstract}

\section{Introduction}\label{sec:intro}

One of the successful approaches to dealing with \textsf{NP}-hard graph problems is the identification of restrictions on input instances under which the problem becomes solvable in polynomial time.
Such restrictions are usually described in terms of \textsl{graph classes}, that is, families of graphs that satisfy some common property.
Many graph classes of practical and theoretical relevance are closed under various graph operations such as vertex deletions, edge deletions, and/or edge contractions.
In particular, classes closed under all these operations are \textsl{minor-closed}; they are well understood, both structurally and algorithmically: most notably, any such graph class can be characterized by a finite \textsl{obstruction set} (that is, the family of minimal forbidden minors, see~\cite{MR2099147}) and recognized in polynomial time (see~\cite{MR1309358}).
In the much more general situation when the class is only closed under vertex deletions, we obtain the family of \textsl{hereditary} graph classes; in this case, the recognition problem is always polynomial if the class is characterized by a finite obstruction set, but can be \textsf{NP}-complete in general (as, for example, in the case of $3$-colorable graphs).

In this paper, we focus on an intermediate variant: graph classes that are closed under vertex deletions and edge contractions; in other words, they are closed under \textsl{induced minors}.
Many graph classes studied in the literature enjoy this property, including chordal graphs~\cite{MR1320296}, circular-arc graphs, their common generalizations $1$-perfectly orientable graphs (see, e.g.,~\cite{MR3647815,MR3152051}) and bisimplicial graphs~\cite{milanic2023bisimplicial}, polygon-circle graphs~\cite{MR1428585}, several geometrically defined graph classes such as the class of planar graphs, and various intersection graph classes.
More precisely, for any graph class $\mathcal G$, the class of graphs defined as intersection graphs of connected subgraphs of graphs in $\mathcal G$ is closed under induced minors.

A recent trend in structural and algorithmic graph theory has been to try to generalize the theory of graph minors to the setting of induced minors.
While this is a much more challenging setting (see, e.g.,~\cite{aboulker2025induceddisjointpathsinduced,bonnet2025inducedminorsregionintersection}), the world of induced-minor-closed classes enjoys several good structural and algorithmic properties (see, e.g.,~\cite{MR4839678,korhonen2023inducedminorfree,DBLP:conf/soda/AhnGHK25,KORHONEN2023206,DBLP:conf/esa/BonnetDGTW23,hickingbotham2025inducedminorsasymptoticdimension}).

Every induced-minor-closed class can be characterized by a family $\mathcal F$ of forbidden induced minors.
The set $\mathcal F$ may be finite (for instance, $\mathcal{F} = \{K_{2,3}\}$ yields the class of graphs in which each minimal separator induces a subgraph with independence number at most $2$; see, e.g.,~\cite{DMS_JCTB2024,chakraborty2025k23inducedminorfreegraphsadmit}) or infinite (as in the case of bisimplicial graphs, which are defined as graphs in which each minimal separator is a union of two cliques, see~\cite{milanic2023bisimplicial}).\footnote{For a simpler example, consider the case when $\mathcal F$ consists of all complements of cycles.}
However, contrary to the case of the induced subgraph relation, finiteness of the family of forbidden induced minors does not imply polynomial-time recognition (unless \textsf{P} = \textsf{NP}).
Indeed, as shown by Fellows, Kratochv\'{\i}l, Middendorf, and Pfeiffer in 1995~\cite{MR1308575}, there exists a graph $H$ such that determining if a given graph $G$ contains $H$ as an induced minor is \textsf{NP}-complete.
Recently, Korhonen and Lokshtanov showed that this can happen even when $H$ is a tree~\cite{korhonen2023inducedminorfree}. Even more recently, Aboulker, Bonnet, Picavet, and Trotignon~\cite{aboulker2025induceddisjointpathsinduced} showed that this can also happen when $H$ is subcubic and without two adjacent degree-$3$ vertices.

It is thus a natural question to determine for which graphs $H$ deciding if a given graph $G$ contains $H$ as an induced minor can be done in polynomial time.
This is the case for all graphs $H$ on at most four vertices.
The problem is particularly simple if $H$ is a forest on at most four vertices or the disjoint union of a vertex and a triangle.
Otherwise, excluding $H$ always leads to a well structured class that is easy to recognize.
When $H$ is the $4$-cycle, that is, $K_{2,2}$, the class of graphs excluding $H$ as an induced minor is precisely the class of chordal graphs.
For $H = K_4$, excluding $H$ as an induced minor is equivalent to excluding it as a minor, which leads to the class of graphs with treewidth at most two.
For the diamond, that is, the graph $H$ obtained from $K_4$ by removing one edge, excluding $H$ leads to graphs whose blocks are all complete graphs or chordless cycles (see~\cite[Theorem 4.6]{hartinger2017new}).
For the paw, that is, the graph $H$ obtained from $K_4$ by removing two adjacent edges, a direct algorithm is easy.
In this latter case, we note that excluding $H$ as an induced minor is equivalent to excluding $H$ as an induced topological minor, and this leads to a very simple class~\cite{DBLP:journals/jgt/ChudnovskyPST12}.

For graphs with more than four vertices, Fiala, Kamiński, and Paulusma~\cite{FKM12} showed that the problem of deciding whether a fixed graph $H$ is an induced minor of a given graph is polynomial if $H$ is a forest with at most seven vertices, except for one remaining open case.
In the same paper, the authors also showed that it is polynomial whenever $H$ is a subdivided star or a star to which two leaves are added to a vertex of degree one.
More recently, three infinite families of graphs have been identified, for which the problem becomes polynomial whenever $H$ belongs to one of them~\cite{DDHP2024}.
These three families are the \emph{flowers} (informally, paths, cycles, and diamonds intersecting in one vertex), stars whose center can have up to two true twins, and the generalized houses and bulls (paths or cycles plus one vertex adjacent to both endpoint of an edge).

If we restrict the class of graphs where we want to decide the presence of an induced minor, then the problem can be solved in linear time for every fixed graph $H$ when working with graphs of bounded treewidth, using the celebrated metatheorem of Courcelle~\cite{MR1042649}.
Furthermore, since the property that a graph $G$ admits an induced minor model of $H$ can be expressed as an $\mathsf{MSO}_1$ formula (see \cref{sec:imt-tree-independence}),
it is even possible to solve the problem in polynomial time in graphs of bounded cliquewidth using metatheorem due to Courcelle, Makowsky, and Rotics~\cite{MR1739644}.
For incomparable classes of graphs, several ad-hoc results have been obtained, for instance for planar graphs~\cite{MR1308575}, or, more generally, for proper minor-closed classes\footnote{A graph class $\mathcal{G}$ is \emph{proper} minor-closed if it is minor-closed and is not the class of all graphs.} if $H$ is planar~\cite{HKPST12}, AT-free graphs~\cite{GKP13}, and chordal graphs~\cite{BGHHKP11}.

\subsubsection*{Results and organization of the paper}

We first prove in \cref{sec:deciding_finding} that in any induced-minor-closed class $\mathcal{G}$, the problem of finding an induced minor model of a fixed graph $H$ in a graph $G \in \mathcal{G}$, if one exists, can be polynomially reduced to the problem of deciding if $H$ is an induced minor of $G$.
We then investigate, in \Cref{parameters of models}, structural properties of induced minor models.
More precisely, we show in \Cref{subsec:tw-branch-sets} that in any minimal induced minor model of a graph $H$ in a graph $G$, the treewidth of the subgraph of $G$ induced by any branch set is bounded from above by a function of $H$ alone.
This result motivates the question of whether the treewidth of the subgraph induced by the entire induced minor model can also be bounded.
We investigate this question in \Cref{subsec:tw-model}, where we show that this statement fails in general, even if $H$ is a tree and when treewidth is replaced by degeneracy.
We prove, however, that the result holds if every edge of $H$ is incident with a vertex of degree at most $2$.
Then, in \Cref{subsec:omega-model}, we show that a relaxation of the above property holds, namely that for any minimal induced minor model of a graph $H$ in a graph $G$, the chromatic number of the subgraph of $G$ induced by the entire induced minor model is bounded from above by a function of $H$ alone.

We complement these results by showing in \cref{sec:imt-tree-independence} that deciding if a given graph $G$ with bounded tree-independence number contains a fixed graph $H$ as an induced minor can be done in polynomial time.
We refer the reader to \cref{sec:preliminaries} for a formal definition of tree-independence number.
As a byproduct, we obtain polynomial-time algorithms to detect the $4$-wheel, the $5$-vertex complete graph minus an edge and the complete bipartite graph $K_{2,q}$ as an induced minor, for any fixed nonnegative integer $q$. Together with the results presented in this paper, the aforementioned results show that the problem of finding $H$ as an induced minor is polynomial-time solvable for all but three graphs on five vertices: $K_4$ with a pendant vertex, the same graph minus an edge incident to the neighbor of the pendant vertex, and a $K_4$ with a subdivided edge.
We note moreover that, by a result of  Dallard, Milani\v{c}, and \v{S}torgel~\cite[Lemma 3.2]{DMS_JCTB2024}, graphs not containing $K_{2,q}$ as an induced minor are exactly the graphs in which all minimal separators have independence number less than $q$.
In particular, graphs that do not contain $K_{2,3}$ as an induced minor have minimal separators with independence number at most $2$, and are hence a natural generalization of chordal graphs (where all minimal separators are cliques).
We focus on this particular case in \cref{sec:k23} and provide an alternative polynomial-time algorithm to detect $K_{2,3}$ as an induced minor.\footnote{This was the main result presented in the extended abstract published at IWOCA 2024~\cite{DBLP:conf/iwoca/DallardDHMPT24}.}
Our algorithm uses the so-called shortest path detector method to detect the induced subgraphs whose presence is forced when the input graph contains $K_{2,3}$ as an induced minor.
It turns out that all these induced subgraphs are Truemper configurations.

Observe that detecting $K_{2, 3}$ as an induced minor might look easy, in particular because the topological version (that is, detecting a theta) is known already.
We might indeed have overlooked a simpler method to solve it.
However, several \textsf{NP}-completeness results for questions similar to the ones that we address here (namely the detection of prisms and broken wheel, see below for the details) suggest that a too naive attempt is likely to fail.

\section{Preliminaries}
\label{sec:preliminaries}

\subsection{Definitions and notations}

In this article, we consider finite, simple, and undirected graphs with $n$ vertices and $m$ edges.
Given a graph $G$ and a set $S\subseteq V(G)$, the \emph{subgraph of $G$ induced by $S$} is the graph denoted by $G[S]$ with vertex set $S$ in which two vertices are adjacent if and only if they are adjacent in $G$.
We extend this definition so that, for a set $\X$ of subsets of vertices, the \emph{subgraph of $G$ induced by $\X$} is the graph $G[\bigcup_{X \in \X} X]$, denoted by $G[\X]$.
    We denote by $G\setminus S$ the subgraph of $G$ induced by $V(G)\setminus S$, and when $S=\{v\}$, we simply write $G\setminus v$.
  When there is no ambiguity, we do not distinguish between an induced subgraph and its vertex set.

A class of graphs is \emph{hereditary} if it is closed under taking induced subgraphs.
A \emph{subdivision} of a graph $G$ is any graph obtained from $G$ by a sequence of edge subdivisions.
Given two graphs $G$ and $H$, the graph $H$ is an \emph{induced subgraph} of $G$ if $H$ can be obtained from $G$ by a sequence of vertex deletions, an \emph{induced minor} of $G$ if $H$ can be obtained from $G$ by a sequence of vertex deletions or edge contractions, and an \emph{induced topological minor} of $G$ if some subdivision of $H$ is an induced subgraph of $G$.

Given two graphs $G$ and $H$, we say that we \emph{detect} $H$ as an induced subgraph (respectively as an induced minor) of $G$ if we decide whether $G$ contains $H$ as an induced subgraph (respectively as an induced minor).
If the answer is no, then we say that $G$ is \emph{$H$-free} (respectively $G$ is \emph{$H$-induced-minor-free}).
When $H$ is replaced with a (possibly infinite) family $\mathcal{H}$ of graphs, the above definitions are extended in a natural way; for instance, \emph{detecting} $\mathcal{H}$ means deciding whether some graph of $\mathcal{H}$ is an induced subgraph of $G$ (respectively an induced minor of $G$).

Given a graph $G$ and a set $S\subseteq V(G)$, we denote by $N_G(S)$ the set of vertices in $V(G)\setminus S$ having a neighbor in $S$ and by $N_G[S]$ the set $S\cup N_G(S)$.
For $S = \{v\}$ for some $v\in V(G)$, we write $N_G(v)$ for $N_G(\{v\})$ and similarly for $N_G[v]$.
When $G$ is clear from context, the subscript $G$ will be omitted.
Given a graph $G$ and a vertex $v$ of $G$, we denote by $d_G(v)$ the degree of $v$ in $G$, and $\Delta(G)$ the maximum degree of $G$.

Two disjoint subgraphs, or vertex sets, in a graph $G$ are \emph{complete} to each other if every two vertices in different subgraphs, or vertex sets, are adjacent.
Similarly, they are \emph{anticomplete} to each other if no two vertices in different subgraphs, or vertex sets, are adjacent.

A \emph{path} in a graph $G$ is a sequence $v_1 \ldots v_k$ of pairwise distinct vertices of $G$ such that $v_i$ is adjacent to $v_{i+1}$, for every integer $i$ such that $0\leq i \leq k-1$.
We also view the empty set as an empty path.
Given a nonempty path $P = v_1\ldots v_k$, we define the \emph{length} of $P$ as $k-1$, its \emph{endpoints} as the vertices $v_1$ and $v_k$, and its \emph{interior} as the subpath $v_2\ldots v_{k-1}$ (which is empty if $k\le 2$). We also call \emph{internal vertices} the vertices in the interior of $P$.
A \emph{chord} of a path in $G$ is an edge in $G$ whose endpoints are non-consecutive vertices of the path.
A path is \emph{chordless} if it has no chord.
Given a path $P = v_1\ldots v_k$ and two vertices $a$ and $b$ on $P$ such that $a = v_i$ and $b = v_j$ where $1\le i\le j\le k$, we denote by $aPb$ the subpath of $P$ from $a$ to $b$, that is, the path $v_i\ldots v_j$.
Given an integer $k\ge 3$, a \emph{cycle of length $k$} in a graph $G$ is a sequence $v_1\ldots v_kv_1$ of vertices of $G$ such that the vertices $v_1,\ldots, v_k$ are pairwise distinct, $v_i$ is adjacent to $v_{i+1}$ for all $i\in \{1,\ldots, k-1\}$, and $v_k$ is adjacent to $v_1$.
A \emph{chord} of a cycle in $G$ is an edge in $G$ whose endpoints are non-consecutive vertices of the cycle.
A cycle is \emph{chordless} if it has no chord.
For convenience, and when there is no ambiguity, we do not distinguish between a path and its vertex set, and the same for a cycle.

An \emph{independent set} in a graph $G$ is a set of pairwise nonadjacent vertices.
A \emph{clique} in $G$ is a set of pairwise adjacent vertices.
The \emph{clique number} of a graph $G$, denoted by $\omega(G)$, is the maximum cardinality of a clique in $G$.
A \emph{triangle} is a clique of size~$3$.
The graph $K_{p,q}$ is a complete bipartite graph with parts of size $p$ and $q$, respectively.
The \emph{chromatic number} of a graph $G$, denoted by $\chi(G)$, is the minimum integer $k$ such that the vertex set of $G$ is a union of $k$ independent sets.

Given a graph $G$, a \emph{tree decomposition} of $G$ is a pair $\mathcal{T}=(T,\beta)$ consisting of a tree $T$ and a function $\beta\colon V(T)\to 2^{V(G)}$ whose images are called the \emph{bags} of $\mathcal{T}$ such that every vertex of $G$ belongs to some bag, for every $e\in E(G)$ there exists some $t\in V(T)$ with $e\subseteq \beta(t)$, and for every vertex $v\in V(G)$ the set $\{t\in V(T) \colon v\in \beta(t)\}$ induces a subtree of $T$.
We refer to the vertices of $T$ as the \emph{nodes} of the tree decomposition $\mathcal{T}$.
If $T$ is a path, then we call $\mathcal{T}$ a \emph{path decomposition} of $G$.
The \emph{width} of $\mathcal{T}$ equals $\max_{t \in V(T)} |\beta(t)|-1$, and the \emph{treewidth} of a graph $G$, denoted by $\tw(G)$, is the minimum possible width of a tree decomposition of $G$.
The \emph{independence number} of $\mathcal{T}$, denoted by $\alpha(\mathcal{T})$, is defined as  \[\alpha(\mathcal{T})=\max_{t\in V(T)} \alpha(G[\beta(t)]).\]
The \emph{tree-independence number} of a graph $G$, denoted by $\tin(G)$, is the minimum independence number among all possible tree decompositions of $G$.
Observe that every graph $G$ satisfies $\tin(G) \leq \alpha(G)$.

\subsection{Minimal induced minor models}

Let $G$ be a graph and $H$ be a graph that is an induced minor of $G$.
An \emph{induced minor model} of $H$ %
is a collection $\X = \{X_u \colon u \in V(H)\}$ of subsets of $V(G)$ indexed by vertices of $H$ such that:
\begin{itemize}
  \item for $u \neq v \in V(H)$, $X_u \cap X_v = \emptyset$,
  \item for $u \in V(H)$, the graph $G[X_u]$ is connected, and
  \item for $u \neq v \in V(H)$, there exists an edge of $G$ between $X_u$ and $X_v$ if and only if $uv \in E(H)$.
\end{itemize}
Each set $X_u \in \X$ is called a \emph{branch set} of $\X$.
We define the treewidth of \emph{$\X$} as the treewidth of $G[\X]$, the subgraph of $G$ induced by $\X$ (and similarly for degeneracy).

An induced minor model $\X = \{X_u \colon u \in V(H)\}$ of $H$ in $G$ is said to be \emph{minimal} if $\X$ induces in $G$ a subgraph $G'$ such that no proper induced subgraph of $G'$ contains $H$ as an induced minor.

The following lemma can be derived from the proof of \cite[Proposition 3.2]{milanic2024tamevsferaldichotomy}.
To keep the paper self-contained, we include a proof using our notation and terminology.

\begin{lemma}\label{lem: min model deg 2}
  Let $H$ and $G$ be two graphs such that $G$ contains $H$ as an induced minor.
  Then, there exists a minimal induced minor model $\X = \{X_u \colon u \in V(H)\}$ of $H$ in $G$ with the following properties.
  \begin{enumerate}
    \item $|X_u| = 1$ for every vertex $u\in V(H)$ that has degree at most $2$ and belongs to a connected component of $H$ that is not a cycle.
    \item For every connected component $C$ of $H$  that is a cycle, there exists a vertex $v$ of $C$ such that $G[X_v]$ is a path and $|X_u| = 1$ for every vertex $u$ in $V(C-v)$.
  \end{enumerate}
\end{lemma}
\begin{proof}

  Let $\X' = \{X_u' \colon u \in V(H)\}$ be a minimal induced minor model of $H$ in $G$.
  Let $u$ be a vertex of degree at most $2$ in $H$.
  If $d_H(u)\leq 1$, then $|X_u'|=1$, by the minimality of $\X'$.

  Moreover, if $d_H(u)= 2$, with neighbors $v$ and $w$, then there is a unique vertex $x_v$ in $N(X_v')\cap X_u'$, and a unique vertex $x_w$ in $N(X_w')\cap X_u'$, the set $X_u'$ induces in $G$ a path $Q$ whose endpoints are $x_v$ and $x_w$, and no internal vertex of $Q$ is adjacent to any vertex in a branch set other than $X_u'$.
  This property extends to paths of vertices of degree $2$ as follows.

  \begin{observation}\label{cl: deg 2}
    Let $P$ be a path in $H$ consisting only of vertices of degree $2$ in $H$, let $v$ and $w$ be the neighbors not in $P$ of the endpoints of $P$ (possibly $v=w$), and let $X'_P$ be the union of the branch sets of the vertices of $P$.
    Then, there is a unique vertex $x_v$ in $N(X_v')\cap X'_P$ and a unique vertex $x_w$ in $N(X_w')\cap X_P'$, and $X_P'$ induces in $G$ a path $Q$ whose endpoints are $x_v$ and $x_w$, and no internal vertex of $Q$ is adjacent to a vertex in a branch set not included in $X_P'$.
  \end{observation}

  We now construct an induced minor model $\X= \{X_u \colon u \in V(H)\}$ of $H$ in the subgraph $G' = G[\X']$ that satisfies the properties on the vertices of degree $2$ in $H$ stated in the lemma.
  This suffices to prove the lemma, since the definition of $G'$ and the minimality of $\X'$ imply that any such induced minor model $\X$ of $H$ in $G$ is minimal, and thus necessarily satisfies the desired property on the vertices of degree at most $1$.
  Note that if $H$ is not connected, then $G'$ is not connected and we can construct $\X$ independently for each connected component of $H$ in the corresponding connected component of $G'$.
  Hence, we can assume that $H$ is connected.

  Suppose first that $H$ is a cycle, say $u_1u_2\dots u_\ell$. Then by \cref{cl: deg 2}, we get that $G'$ is as cycle of length $\ell'\geq \ell$, say $w_1w_2\dots w_{\ell'}$.
  Then it suffices to set $X_{u_i}=\{w_i\}$ for each $i<\ell$, and $X_{u_\ell}$ contains the remaining vertices.

  Suppose now that $H$ is not a cycle.
  Let $P=u_1\dots u_\ell$ be a maximal path in $H$ such that every vertex of $P$ has degree~$2$ in $H$.
  Since $u_1$ and $u_{\ell}$ have degree $2$ in $H$ and $H$ is not a cycle, there exist two (not necessarily distinct) vertices $u_0$ and $u_{\ell+1}$ in $H$ such that $u_0$ is adjacent to $u_1$, and $u_{\ell+1}$ is adjacent to $u_\ell$, and $N_H(V(P)) = \{u_0,u_{\ell+1}\}$.
  By \cref{cl: deg 2}, the union of $X_{u_i}'$ for $1\leq i\leq \ell$ induces a path $P'=w_1\dots w_{q}$ in $G'$ with $q \geq \ell$. Moreover, $w_1$ has a neighbor in $ X'_{u_0}$, and $w_q$ has a neighbor in $ X'_{u_{\ell+1}}$, and all the other vertices of $P'$ have degree $2$ in $G'$.
  Let $Q'$ be the path obtained by removing $w_1,\dots,w_\ell$ from $P'$ (possibly $Q'$ is empty).
  Then, either $Q'$ is nonempty, in which case $N(V(Q'))$ contains $w_\ell$ as well as a vertex in $ X'_{u_{\ell+1}}$, or $Q'$ is empty, in which case $w_\ell$ has a neighbor in $ X'_{u_{\ell+1}}$.

  Thus, we can set $X_{u_i}=\{w_i\}$ for each $1\leq i\leq \ell$, $X_{u_0}=X'_{u_0}$, and $ X_{u_{\ell+1}} = V(Q')\cup  X'_{u_{\ell+1}}$, and $X_u=X'_u$ for every vertex $u$ in $V(H)\setminus V(P)$.
  We then get a new induced minor model of $H$ in $G'$ such that each vertex of $P$ has a branch set of size $1$.
  Moreover, the new branch sets of the neighbors of $P$ (which do not have degree $2$) contain their previous branch sets, and all the other branch sets are the same.
  We can thus repeat this construction for each path of $H$ with internal vertices of degree $2$.
  Since every vertex of degree $2$ belongs to a unique maximal path consisting only of vertices of degree $2$ in $H$, we get in the end an induced minor model with the desired property.
 Recall that we constructed this model $\X$ in $G'$, the subgraph of $G$ induced by $\X'$ which is minimal, thus $\X$ is necessarily a minimal induced model in $G$. If in this process we got a branch set of size more than one for a vertex of degree at most $1$, this would contradict the minimality of $\X$ and thus of $\X'$, hence this does not happen and $\X$ satisfies the properties in  the statement.
\end{proof}

\section{Deciding is finding}\label{sec:deciding_finding}

  When determining whether a graph $G$ contains a graph $H$ as an induced minor, we might be interested in finding the model of $H$ in $G$, but it could happen that the witness (in the positive case) is something else.
  In this section, we show that any polynomial-time algorithm detecting $H$ as an induced minor in a graph class $\mathcal{G}$ can be transformed into a polynomial-time algorithm that also determines an induced minor model of $H$ in $G$ in the positive case, provided that $\mathcal{G}$ is closed under taking induced minors.
  In fact, we show this for any algorithm deciding the presence of some member of a (possibly infinite) family $\mathcal{H}$ of graphs, instead of a single graph $H$.

  We start by observing that this transformation can be obtained easily if we replace the induced minor relation by the induced subgraph relation.

  \begin{observation}\label{obs sub algo}
    Let $\mathcal{H}$ be a family of graphs and $\mathcal{G}$ a hereditary graph class such that there exists a polynomial-time algorithm to determine whether a given graph $G \in \mathcal{G}$ contains a member of $\mathcal{H}$ as an induced subgraph.
    Then there exists a polynomial-time algorithm that, given a graph $G \in \mathcal{G}$, either returns a minimal induced subgraph of $G$ that belongs to $\mathcal{H}$ or determines that $G$ is $\mathcal{H}$-free.
  \end{observation}

  \begin{proof}
    Let us denote by $\mathcal{A}$ a polynomial-time algorithm that decides if an input graph $G$ contains a member of $\mathcal{H}$ as an induced subgraph.
    If $\mathcal{A}$ answers \textsc{No} when given $G$, then we return \textsc{No}.
    Let $G' \coloneq G$ be a copy of $G$.
    We iterate over the vertex set of $G'$ and, for every vertex $v \in V(G')$, we call $\mathcal{A}$ on $G'-v$.
    If the answer is \textsc{Yes}, then we set $G' \coloneq G'-v$; otherwise, we do not change $G'$.
    At the end of the iteration, $G'$ is a minimal induced subgraph of $G$ containing a member of $\mathcal{H}$ as an induced subgraph, in particular, $G'$ belongs to $\mathcal{H}$, but no proper induced subgraph of $G'$ belongs to $\mathcal{H}$.
  \end{proof}

  We show that this observation can be generalized for induced minor testing.

  \begin{lemma}\label{lem algo}
    Let $\mathcal{H}$ be a family of graphs and $\mathcal{G}$ a graph class closed under taking induced minors such that there exists a polynomial-time algorithm to detect whether a given graph $G \in \mathcal{G}$ contains a member of $\mathcal{H}$ as an induced minor.
    Then there exists a polynomial-time algorithm that, given a graph $G \in \mathcal{G}$, either returns a minimal induced minor model of some $H\in \mathcal{H}$ in $G$  or determines that $G$ is $\mathcal{H}$-induced-minor-free.
  \end{lemma}
  \begin{proof}

    Since $\mathcal{G}$ is closed under taking induced minors, $\mathcal{G}$ is hereditary.
    Let $\mathcal{A}$ be a polynomial-time algorithm that decides if an input graph $G$ contains a graph in $\mathcal{H}$ as an induced minor.
    Let $\mathcal{H}'$ be the family of graphs minimal under taking induced subgraphs that contain a member of $\mathcal{H}$ as an induced minor.
      Observe that a graph $G$ is $\mathcal{H}$-induced-minor-free if and only if $G$ is $\mathcal{H}'$-free.
        Hence,
    $\mathcal{A}$ is a polynomial-time algorithm that decides if an input graph $G$ contains a graph in $\mathcal{H}'$ as an induced subgraph.
        By \cref{obs sub algo}, there exists a polynomial-time algorithm $\mathcal{A}'$ that, given a graph $G \in \mathcal{G}$, either returns a minimal induced subgraph $G_\mathcal{H}$ of $G$ that belongs to $\mathcal{H}'$ or determines that $G$ is $\mathcal{H}'$-free.
        In the former case, $G_\mathcal{H}$ is a subgraph of $G$ induced by a minimal induced minor model of a graph $H \in \mathcal{H}$ in $G$, while in the latter case, $G$ is \hbox{$\mathcal{H}$-induced-minor-free}.
        To obtain the algorithm from the claim, we also need to determine, in the former case, the partition of the vertices of $G_\mathcal{H}$ into branch sets of a minimal induced minor model in $G$ of some graph in $\mathcal{H}$.

    Let $G$ be a graph from $\mathcal{G}$ and apply $\mathcal{A}'$ on $G$.
    If $\mathcal{A}'$ answers \textsc{No} when given $G$, then $G$ is $\mathcal{H}$-induced-minor-free, and we return \textsc{No}.
    So we can assume that $G$ contains a graph in $\mathcal{H}$ as an induced minor, and let $G_\mathcal{H}$ be the subgraph induced by a minimal induced minor model of some graph from $\mathcal{H}$ in $G$ returned by $\mathcal{A}'$.
    Note that every induced minor model of a graph $H\in\mathcal{H}$ in $G_\mathcal{H}$ is a minimal model of $H$ in $G$, and thus the goal is to correctly partition the vertices of $G_\mathcal{H}$ into branch sets.
    For that, we first identify a set of edges $E_{c}$ whose contraction in $G_\mathcal{H}$ results in a graph of $\mathcal{H}$.

    Let us start with $G'=G_\mathcal{H}$, $E_{c}=E_{\bar{c}}=\emptyset$, and maintain the fact that $G'$ contains a graph of $\mathcal{H}$ as an induced minor and is obtained from $G_\mathcal{H}$ by contracting the edges in $E_{c}$. Moreover, we maintain a set $E_{\bar{c}}$ such that if we contract the edges in $E_{c}$ and an edge in $E_{\bar{c}}$, then the resulting graph does not contain any graph of $\mathcal{H}$ as induced minor.

    For every edge $f$ of $G'$, let us denote $E_f$ the set of edges in $G_\mathcal{H}$ corresponding to $f$ in $G'$, \textit{i.e.}, the edges in $G_\mathcal{H}$ having endpoints in connected subgraphs $A$ and $B$ of $G_\mathcal{H}$ such that contracting the edges in $E_{c}$ maps $A$ and $B$ to the endpoints of $f$, respectively.
    Notice that contracting $f$ in $G'$ results in the same graph as contracting all the edges in $E_{c}$ plus an edge in $E_f$.
    In particular, if $E_f\cap E_{\bar{c}}\neq \emptyset$, then contracting $f$ in $G'$ results in a graph that does not admit a graph of $\mathcal{H}$ as induced minor.

    While $G'\notin\mathcal{H}$, there has to be some edge $f$ whose contraction results in a graph that contains a graph of $\mathcal{H}$ as induced minor, thus $E_f\cap E_{\bar{c}}=\emptyset$.
    Let us pick such an edge $f$.
    We apply $\mathcal{A}$ on the graph $G'_f$ obtained by contracting $f$ in $G'$.
    \begin{itemize}
      \item If the answer is \textsc{Yes}, we update $G':= G'_f$ and $E_{c}:= E_{c}\cup \{e\}$ where $e$ is an arbitrary edge in $E_f$.
      \item If the answer is \textsc{No}, then in every induced minor model obtained by contracting the edges in $E_{c}$ (we know there exists at least one such model by definition of $E_{c}$), we cannot obtain a graph in $\mathcal{H}$
            by contracting also an edge of $E_f$.
            Thus, we keep the same graph $G'$ for the next step and update $E_{\bar{c}}:= E_{\bar{c}}\cup E_f$.
    \end{itemize}

    At the end, $G'$ is isomorphic to a graph $H\in\mathcal{H}$ and obtained from $G_\mathcal{H}$ by contracting all the edges in $E_{c}$, and this algorithm requires at most $|E(G)|\leq |V(G)|^2$ calls to $\mathcal{A}$.

    Now, since $H$ is obtained by contracting $E_{c}$, observe that the graph $G''$ with vertex set $V(G_\mathcal{H})$ and edge set $E_{c}$ consists of $|V(H)|$ connected components, each corresponding to a branch set.
    Hence, to determine a corresponding induced minor model, it suffices to iterate over all the $|V(H)|!$ assignments of the connected components of $G''$ as branch sets of the induced minor model and, as soon as one assignment yields an induced minor model of $H$, return the corresponding induced minor model.
  \end{proof}

\section{Treewidth and chromatic number of induced minor models}\label{parameters of models}

In this section, we investigate structural properties of minimal induced minor models of a graph $H$ in a graph $G$, more precisely, what can be said about the parameters of the subgraphs of $G$ induced either by the individual branch sets or by their union.
The parameters we consider are treewidth in \Cref{subsec:tw-branch-sets,subsec:tw-model} and chromatic number in \Cref{subsec:omega-model}.

\subsection{Minimal models have branch sets with small treewidth}\label{subsec:tw-branch-sets}

We start with a lemma about tree decompositions of minimal induced subgraphs connecting a given set of vertices.

\begin{lemma}\label{small tw}
  Let $G$ be a graph and $S\subseteq V(G)$.
  Let $X\subseteq V(G)$ be a set such that $S \subseteq X$, $G[X]$ is connected, and $X$ is minimal with respect to these properties.
  Then, there is a tree decomposition $\T$ of $G[X]$ of width at most $|S|$ such that one of the bags contains $S$ and the tree decomposition obtained from $\T$ by removing vertices in $S$ from the bags is a tree decomposition of $G[X]-S$ of width at most $|S|-1$.
\end{lemma}

\begin{proof}
  We use induction on $|S|$.
  The statement is clear for $|S| \leq 1$.
  Assume now that $|S|\ge 2$.
  By the minimality of $X$, all the leaves of a tree subgraph of $G[X]$ that spans $S$ are in $S$.
  Fix such a leaf $v$ and note that the graph $G[X\setminus \{v\}]$ is connected.
  Consider a minimal subset $X'$ of $X$ containing $S'=S\setminus \{v\}$ such that $G[X']$ is connected.

  By the induction hypothesis, there is a tree decomposition $\mathcal{T'}=(T',\beta')$ of $G[X']$ width at most $|S|-1$, with $t'\in V(T')$ such that $S'\subseteq \beta(t')$. Moreover, removing $S'$ from each bag of $\mathcal{T'}$ results in a tree decomposition of $G[X']-S'$ of width at most $|S|-2$, that is, $|\beta'(t)\setminus S'|\le |S|-1$ for all $t\in V(T')$.

  By the minimality of $X$, the set $X$ is the disjoint union of $X'$ and the vertex set of a shortest path $P= v \dots u$ in $G[X]$ from $v$ to a neighbor of $X'$.
  By the minimality of $P$, the path $P$ is induced and $V(P)\setminus \{u\}$ is anticomplete to $X'$ (note that possibly $u=v$).

  Let $\mathcal{R}=(R,\beta_R)$ be a tree decomposition of width $1$ of $P$.
  We construct a tree decomposition $\mathcal T =(T,\beta)$ of $G[X]$ such that $T$ is the tree obtained by adding an edge between some node of $R$ and the node $t'$ of $T'$, and for each $t\in V(T)$, we have $\beta(t)=\beta'(t)\cup \{u\}$ if $t\in V(T')$, and $\beta(t)=\beta_R(t)\cup S'$ if $t\in V(R)$.
  Note that this is indeed a tree decomposition of $G[X]$.
  Furthermore, there is a node $r$ of $R$ such that $v\in \beta_R(r)$ and thus $S\subseteq \beta(r)$.

  Finally, let us consider the tree decomposition $\widehat{\mathcal{T}}=(T,\widehat{\beta})$ of $G[X]-S$ obtained by removing the vertices of $S$ from each bag of $\mathcal{T}$.
  Then for each node $t\in V(T')$, it holds that $\widehat{\beta}(t)\subseteq (\beta'(t)\setminus S')\cup \{u\}$, hence,
  $|\widehat{\beta}(t)|\le (|S|-1)+1= |S|$ by the induction hypothesis; and for each node $t\in V(R)$, it holds that $\widehat{\beta}(t)\subseteq \beta_R(t)$ and, hence, $|\widehat{\beta}(t)|\le |\beta_R(t)|\le 2\le |S|$.
  Thus, $\widehat{\mathcal{T}}$ has width at most $|S|-1$.
  This proves the lemma.
\end{proof}

The previous \lcnamecref{small tw} has the following consequence for induced minor models.

\begin{theorem}\label{minimal bags have small tw}
  Let $H$ and $G$ be two graphs such that $G$ contains $H$ as an induced minor.
  Let $\X = \{X_u \colon u \in V(H)\}$ be a minimal induced minor model of $H$ in $G$.
  Then, for every $u \in V(H)$, the graph $G[X_u]$ has treewidth at most $\max \{d_H(u)-1, 0\}$.
\end{theorem}

\begin{proof}
  Fix a vertex $u \in V(H)$.
  If $d_H(u) = 0$, then $|X_u| = 1$ by the minimality of $\X$, hence, $\tw(G[X_u]) = 0$.
  Assume now that $d_H(u) \geq 1$.
  For each neighbor $v$ of $u$ in $G$, let $x_v$ be a vertex in $X_v$ that is adjacent in $G$ to some vertex in $X_u$, and let $S = \{x_v\colon v\in N_H(u)\}$.
  Let $G'$ be the graph obtained from $G[X_u\cup S]$ by deleting the edges with both endpoints in $S$ (if any).
  Then $X_u\cup S$ is a set inducing a connected subgraph of $G'$ that contains $S$ and is minimal with respect to this property.
  By \cref{small tw}, there is a tree decomposition $\T$ of $G'$ of width at most $|S|$ such that one bag contains $S$ and the tree decomposition obtained from $\T$ by removing vertices in $S$ from the bags is a tree decomposition of $G'[X_u]$ of width at most $|S|-1$.
  Since $G'[X_u] = G[X_u]$, this implies that the graph $G[X_u]$ has treewidth at most $d_H(u)-1$, as claimed.
\end{proof}

Note that the bound on the treewidth of $G[X_w]$ in \cref{minimal bags have small tw} is tight.
For instance, for the graph $G_n$ consisting of a clique on $n$ vertices to which exactly one leaf is attached on each vertex and where $H_n$ is the star with $n$ leaves.
More formally, for every positive integer $n$, consider the graph $G_n$ with vertex set $V(G_n) = \{u_1,\ldots, u_n\}\cup\{v_1,\ldots, v_n\}$ and edge set $\{\{u_i,u_j\}\colon 1\le i<j\le n\}\cup \{\{u_i,v_i\}\colon 1\le i\le n\}$.
Furthermore, let $H_n$ denote the star $K_{1,n}$, that is, the graph with vertex set $V(H_n) = \{a,b_1,\ldots, b_n\}$ and edge set $\{\{a,b_i\}\colon 1\le i\le n\}$.
Let $X_a = \{u_1,\ldots, u_n\}$ and $X_{b_i} = \{v_i\}$ for all $i\in \{1,\ldots, n\}$.
Then, $\X = \{X_u \colon u \in V(H_n)\}$ is a minimal induced minor model of $H_n$ in $G_n$.
Furthermore, the graph $G_n[X_a]$ is the complete graph $K_n$ and therefore has a treewidth equal to $n-1 = d_{H_n}(a)-1$.

\subsection{On the treewidth of minimal models}\label{subsec:tw-model}

\Cref{minimal bags have small tw} motivates the question of whether the treewidth of the subgraph induced by the entire induced minor model can also be bounded.
We show next that this statement fails in general, even when $H$ is a tree.

\begin{figure}[htp]
  \centering
  \begin{tikzpicture}[scale=1.1]
    \tikzset{every node/.style={v:main}}
    \begin{scope}[xscale=0.75]
      \node[v:main,fill=red,fill opacity=0.1,label={below left:$p'_1$}] (L1) at (-1,0) {};
      \node[v:main,label={above:$y'_1$}] (l1) at (-1,1) {};
      \node[v:main,label={above:$x'_1$}] (l2) at (-2,0) {};
      \node[v:main,label={below:$z'_1$}] (l3) at (-1,-1) {};
      \node[v:main,label={left:$w'_1$}] (ll) at (-3,0) {};
      \node[v:main,fill=red,fill opacity=0.1,label={below left:$p'_2$}] (R1) at (1,0) {};
      \node[v:main,label={above:$y'_2$}] (r1) at (1,1) {};
      \node[v:main,label={above:$x'_2$}] (r2) at (2,0) {};
      \node[v:main,label={below:$z'_2$}] (r3) at (1,-1) {};
      \node[v:main,label={right:$w'_2$}] (rr) at (3,0) {};

      \foreach \x in {1,2,3}{
          \draw (L1) to (l\x);
          \draw (R1) to (r\x);
        }
      \draw (L1) to (R1)
      (ll) to (l2)
      (rr) to (r2);
    \end{scope}

    \begin{scope}[xshift=6.5cm,xscale=0.75,yscale=0.5]
      \node[v:main,label={above left:$x_1$}] (L) at (-2,0) {};
      \node[v:main,label={left:$w_1$}] (VL) at (-3,0) {};
      \node[v:main,label={above right:$x_2$}] (R) at (2,0) {};
      \node[v:main,label={right:$w_2$}] (VR) at (3,0) {};
      \draw (VL) to (L)
      (VR) to (R);
      \node[v:main,label={above:$y_1$}] (TL4) at (-1,4) {};
      \node[v:main,label={below:$z_1$}] (BL4) at (-1,-4) {};
      \node[v:main,label={above:$y_2$}] (TR4) at (1,4) {};
      \node[v:main,label={below:$z_2$}] (BR4) at (1,-4) {};

      \foreach \x in {3,2,1} {
          \node[v:main] (TL\x) at (-1,\x) {};
          \node[v:main] (BL\x) at (-1,-\x) {};
          \node[v:main] (TR\x) at (1,\x) {};
          \node[v:main] (BR\x) at (1,-\x) {};
        }
      \coordinate (ML1) at (-1,-0.25);
      \coordinate (ML2) at (-1,0.25);
      \coordinate (MR1) at (1,-0.25);
      \coordinate (MR2) at (1,0.25);
      \foreach \P in {TL,BL,TR,BR}{
          \draw (\P4) to (\P3)
          (\P3) to (\P2)
          (\P2) to (\P1);
        }
      \draw[dotted] (TL1) to (BL1);
      \draw[dotted] (TR1) to (BR1);

      \foreach \x in {TL1,TL2,TL3,BL1,BL2,BL3,ML1,ML2}{
          \draw (TR1) to (\x)
          (TR2) to (\x)
          (TR3) to (\x)
          (BR1) to (\x)
          (BR2) to (\x)
          (BR3) to (\x);
        }
      \foreach \x in {TL1,TL2,TL3,BL1,BL2,BL3}{
          \draw (\x) to (MR1)
          (\x) to (MR2);
        }
      \draw (L) to (TL3)
      (L) to (TL2)
      (L) to (TL1)
      (L) to (ML1)
      (L) to (ML2)
      (L) to (BL1)
      (L) to (BL2)
      (L) to (BL3);
      \draw (R) to (TR3)
      (R) to (TR2)
      (R) to (TR1)
      (R) to (MR1)
      (R) to (MR2)
      (R) to (BR1)
      (R) to (BR2)
      (R) to (BR3);
      \fill[red,opacity=0.1] (-1,0) ellipse (0.55cm and 3.5cm);
      \draw[dashed] (-1,0) ellipse (0.55cm and 3.5cm);
      \node[v:empty,label={$P_1$}] at (-1.85,-2.5) {};
      \fill[red,opacity=0.1] (1,0) ellipse (0.55cm and 3.5cm);
      \draw[dashed] (1,0) ellipse (0.55cm and 3.5cm);
      \node[v:empty,label={$P_2$}] at (+1.85,-2.5) {};
    \end{scope}

  \end{tikzpicture}
  \caption{The graphs $T$ (left) and $G_k$ (right) from \cref{H with large treewidth}.
  }
  \label{fig:H large tw}
\end{figure}

\begin{proposition}\label{H with large treewidth}
  There exists a $10$-vertex tree $T$ such that, for every positive integer~$k$, there exists a graph $G_k$ with $2k+8$ vertices in which every induced minor model of $T$ induces a subgraph of degeneracy at least $k$, and thus of treewidth at least~$k$.
\end{proposition}

\begin{proof}
  Let $T$ be the $10$-vertex tree depicted in \cref{fig:H large tw}.
  For any integer $k \geq 1$, let $G_k$ be the graph obtained as follows.
  Start with two vertex-disjoint paths $\widehat{P}_1$ and $\widehat{P}_2$ with $k+2$ vertices each, and for each $i \in \{1,2\}$, let $y_i,z_i$ be the endpoints of $\widehat{P}_i$, and $P_i$ the subpath of $\widehat{P}_i$ induced by its internal vertices.
  (Note that $P_i$ is nonempty, since $k\ge 1$.)
  Then, add all the edges so that $P_1$ is complete to $P_2$.
  Add four vertices $w_1$, $x_1$, $w_2$, and $x_2$ such that $w_ix_i$ is an edge, for $i \in \{1,2\}$.
  Finally, connect $x_1$ to all vertices of $P_1$, and $x_2$ to all vertices of $P_2$.
  The graph $G_k$ has $2k+8$ vertices.
  See \cref{fig:H large tw} for a schematic representation of~$G_k$.

  Since the degeneracy of any graph is bounded from above by its treewidth, it suffices to show that every induced minor model of $T$ in $G_k$ induces a subgraph of degeneracy at least $k$.
  Note that $P_1$ and $P_2$ each contain $k$ vertices and are complete to each other.
  Therefore, any induced subgraph of $G_k$ containing the vertices in $V(P_1)\cup V(P_2)$ has degeneracy at least $k$.

  The graph $G_k$ contains $T$ as an induced minor, obtained by contracting the paths $P_1$ and $P_2$, respectively, into single vertices.
  The corresponding induced minor model consists of branch sets each containing one vertex, except possibly for the two branch sets equal to $V(P_1)$ and $V(P_2)$. %
  From our previous arguments, this model has degeneracy at least $k$.
  We claim that this is the unique induced minor model of $T$ in $G_k$, up to symmetry.

  Let $\X = \{X_u\colon u\in V(T)\}$ be an induced minor model of $T$ in $G_k$.
  Since $T$ contains an induced $6$-vertex path $W_T$ between $w'_1$ and $w'_2$, the graph $G_k$ must admit a chordless path on at least $6$ vertices containing vertices from each of the $6$ branch sets of $\X$ corresponding to vertices of $W_T$.
  Denote this path in $G_k$ by $W$.

  Suppose first that $W$ contains two vertices from $P_1$.
  We claim that $W$ does not contain any vertex of $V(P_2)\cup \{x_1\}$.
  Indeed, $W$ can contain at most one such vertex $u$, otherwise $W$ would contain a cycle, and such a vertex $u$ must be an internal vertex of $W$.
  So all the vertices of $W$ but $u$ have to be in $\widehat{P_1}$ and all the vertices of $P_1$ are adjacent to $u$, a contradiction.
  Hence, $W$ is included in $\widehat{P_1}$.
  For a similar reason, since the path $W_T$ is induced and $\X$ is an induced minor model of $T$ in $G_k$, the branch sets of the vertices in $W_T$ do not intersect $V(P_2)\cup \{x_1\}$.
  Therefore, the branch sets of the vertices of $W_T$ are included in $\widehat{P_1}$.
  Since the branch sets of $p_1'$ and $p_2'$ have the same neighborhood in $V(G_k) \setminus \widehat{P_1}$, the branch sets of $y_1',z_1',x_1'$ cannot contain a vertex of $P_2$ nor $x_1$; so they belong to $\widehat{P_1}$.
  This yields a contradiction, since the branch set of $p_1'$ only has two neighbors in $\widehat{P_1}$.
  This shows that $W$ contains at most one vertex from $P_1$.

  Now, if $W$ contains no vertex of $P_2$, then, since $W$ has at least $6$ vertices, $W$ has to contain two vertices of $P_1$, a contradiction.
  We conclude by symmetry that $W$ contains exactly one vertex from each $P_1$ and $P_2$.

  It can then be observed that the only possible option for $W$ is to have exactly $6$ vertices and to lie between $w_1$ and $w_2$ in $G_k$.
  By symmetry, we may assume without loss of generality that for each $v\in \{w_1,w_2,x_1,x_2\}$, we have $X_{v'} = \{v\}$.
  Consequently, for each $i \in \{1,2\}$, the unique vertex $p_i$ in $W\cap X_{p_i'}$ belongs to $P_i$.
  Moreover, since $p'_1$ is not adjacent to $x'_2$ in $T$, the branch set $X_{p'_1}$ has to be included in $V(\widehat{P}_1)$, and similarly, $X_{p'_2}\subseteq V(\widehat{P}_2)$.
  Then, the branch sets $X_{y'_1}$ and $X_{z'_1}$ cannot contain vertices adjacent to $p_2$ nor to $x_2$, so they cannot contain a vertex in $P_1$ nor $P_2$, and thus $X_{y'_1}\cup X_{z'_1} = \{y_1,z_1\}$.
  Similarly, $X_{y'_2}\cup X_{z'_2} = \{y_2,z_2\}$.
  By symmetry, we may assume without loss of generality that for each $v\in \{y_1,y_2,z_1,z_2\}$, we have $X_{v'} = \{v\}$.
  Finally, for each $i\in \{1,2\}$, to ensure that there is an edge between $X_{p_i'}$ and each of $X_{y_i'} = \{y_i\}$ and $X_{z_i'} = \{z_i\}$, the fact that $X_{p_i'}$ induces a connected subgraph implies that $X_{p_i'} = V(P_i)$.
  We thus obtain the induced minor model of $T$ as previously described.
\end{proof}

Observe that the tree $T$ from \Cref{H with large treewidth} admits a unique edge with both endpoints of degree larger than $2$.
Next, we show that, for any graph $H$ without such edges, any graph $G$ containing $H$ as an induced minor admits an induced minor model of $H$ with treewidth at most $|V(H)|$, and thus degeneracy at most $|V(H)|$.
This strengthens an analogous result due to Korhonen and Lokshtanov \cite[Lemma~5.2]{korhonen2023inducedminorfree} establishing a bound on degeneracy of $3|V(H)|$.

\begin{proposition}\label{subdivided edges mean small tw}
  Let $H$ be a graph every edge of which is incident to a vertex of degree at most $2$, and let $G$ be a graph that contains $H$ as an induced minor.
  Then, every minimal model of $H$ in $G$ has treewidth at most $|R|$, %
  where $R$ is the set of vertices with degree at most $2$ in $H$.
\end{proposition}

\begin{proof}
  Let $\X = \{X_u \colon u \in V(H)\}$ be a minimal induced minor model of $H$ in $G$ satisfying the conditions of
  \cref{lem: min model deg 2}.
  Let $G_H = G[\X]$ and let $R \subseteq V(H)$ be the set of vertices of $H$ that have degree at most~$2$.
  It is easy to see that if $H$ is a cycle, then $G_H$ is a cycle, which has treewidth $2\leq |R|-1$ and the result holds.
  Moreover, if $H$ is not connected, then if suffices to prove the statement on each connected component.
  We can thus assume that $H$ is connected and is not a cycle.

  \Cref{lem: min model deg 2} guarantees that, for every $r\in R$, the branch set $X_r$ contains a unique vertex.
  Let $S = \bigcup_{r\in R}X_r$. Note that we have $|S|=|R|$.
  Moreover, since every edge of $H$ is incident to a vertex of degree at most $2$, every connected component of $G_H-S$ is induced by a unique branch set $X_v$, for some $v \in V(H)\setminus R$.

  Let $G'$ be the graph obtained from $G_H$ by removing every edge with both endpoints in $S$, and notice that $G_H-S=G'-S$. %
  We construct a tree decomposition $\mathcal{T}$ of $G'$ of width at most $|S|$ such that there is a bag containing $S$. As a consequence, $\mathcal{T}$ is a tree decomposition of $G_H$, which gives the desired result.

  For every connected component $G[X_v]$ of $G'-S$, for $v \in V(H)\setminus R$, let $S_v\subseteq S$ be the neighborhood of $X_v$ in $G'$.
  In particular, for each $v \in V(H)\setminus R$, it holds that $S_v$ is an independent set in $G'$.
  Moreover, since $\X$ is minimal, the set $X_v \cup S_v$ induces a connected subgraph of $G'$ that contains $S_v$ and is minimal with respect to this property.
  Thus, for each $v \in V(H)\setminus R$, \cref{small tw} applies to the induced subgraph $G_v=G'[X_v\cup S_v]$.

  Now, we construct a tree decomposition $\T = (T, \beta)$ of $G'$ as follows.
  For each component $G[X_v]$ of $G'-S$, for $v \in V(H)\setminus R$, we define $G_v$ as above.
  Then \cref{small tw} guarantees the existence of a tree decomposition $\T_v=(T_v,\beta_v)$ of $G_v$ of width at most $|S_v|\leq |S|$, with a node $t_v\in V(T_v)$ such that $S_v\subseteq \beta_v(t_v)$.
  Let $T$ be any tree obtained from the disjoint union of all trees $T_v$ by adding a node $t_S$ adjacent all the nodes $t_v$, and $\beta$ is defined so that for each $t \in V(T)$, $\beta(t) = \beta_v(t)$ if $t \in V(T_v)$, and $\beta(t_S) = S$.
  The resulting tree decomposition $\T = (T, \beta)$ of $G'$ is also a tree decomposition of $G_H$ and has width at most $|S|$, which completes the proof.
\end{proof}

\subsection{Minimal models have small chromatic number}\label{subsec:omega-model}

We conclude this section by proving the following bound on the chromatic number of a minimal induced minor model.

\begin{theorem}\label{model with bounded chi}
  Let $H$ and $G$ be two graphs such that $G$ contains $H$ as an induced minor.
  Then, for any minimal induced minor model $\X$ of $H$ in $G$, it holds $\chi(G[\X]) \leq \max\{\min\{2|E(H)|, \chi(H)\Delta(H)\},1\}$.

\end{theorem}
\begin{proof}
  Let $\X = \{X_u\colon u\in V(H)\}$ be a minimal induced minor model of $H$ in $G$, and let $G' = G[\X]$ be the subgraph of $G$ induced by $\X$.
  We may assume that $H$ is connected, since if $H$ is disconnected, the branch sets of different components are anticomplete to each other, and it suffices to prove the claim for every connected component of $H$.

  If a vertex $u\in V(H)$ has degree $0$, $H$ is a single vertex and $|X_u| = 1$, hence, $\chi(G') = 1$ and the conclusion follows.
  So we can assume that all the vertices of $H$ have degree at least $1$.

  Let $I_1,\dots, I_{\chi(H)}$ be a partition of the vertices of $H$ into $\chi(H)$ independent sets.
  Let $1 \le j\le \chi(H)$. Let $G_j$ be the subgraph of $G'$ induced by the union of the branch sets of the vertices in $I_j$, that is, $V(G_j) = \bigcup_{u\in I_j}X_u$.
  For each $u \in V(H)$,
  \begin{equation}\label{eq0}
    \chi(G[X_u])\leq \tw(G[X_u])+1\leq d_H(u)\,,
  \end{equation}
  where the first inequality holds since every graph $F$ satisfies $\chi(F) \leq \tw(F)+1$ and the second one holds by \Cref{minimal bags have small tw} (and since $d_H(u) \ge 1$).
  Since every connected component of $G_j$ corresponds to a unique branch set, we can use the same set of colors for each of those branch sets.
  Hence, $\chi(G_j) = \max \{\chi(G[X_u])\colon u\in I_j\}\le \max\{d_H(u)\colon u\in I_j\}$ by \cref{eq0}.
  We thus get the following upper bound on the chromatic number of $G'$:
  \begin{equation}\label{eq1}
    \chi(G')\leq \sum\limits_{j=1}^{\chi(H)}\chi(G_j)\leq \sum\limits_{j=1}^{\chi(H)} \max\{d_H(u)\colon u\in I_j\}
  \end{equation}

  To conclude, it is enough to show that the right term of \cref{eq1} is at most $\min\{2|E(H)|,\chi(H)\Delta(H)\}$.
  Indeed, for each $j$, $ \max\{d_H(u)\colon u\in I_j\}\leq \sum_{u\in I_j}d_H(u)$, and hence
  the sum of those maximum degrees is at most the sum of all the degrees of vertices in $H$, that is, $2|E(H)|$.
  Similarly, for each $j$, we have that  $ \max\{d_H(u) : u\in I_j\}\leq \Delta(H)$, and thus the righthand sum is at most $ \chi(H)\Delta(H)$.
\end{proof}

Let us note that the two bounds of $2|E(H)|$ and $\chi(H)\Delta(H)$ are incomparable to each other, as seen by the following examples.
\begin{itemize}
  \item Fix an integer $k\ge 1$ and let $H$ be a bipartite graph with maximum degree~$k$.
        Then, $\chi(H)\Delta(H)\le 2k$, while the value of $2|E(H)|$ can be arbitrarily large.
  \item For an integer $k\ge 2$, let $H_k$ be graph obtained from the star $K_{1,k}$ after adding an edge between two leaves.
        Then $\chi(H)\Delta(H) = 3k$ and $2|E(H)| = 2(k+1)$.
        Hence, for all $k\ge 3$, the bound of $2|E(H)|$ is better.\footnote{One can remark that we can increase the gap between $\chi(H)\Delta(H)$ and $2|E(H)|$ by replacing one of the leaves by a clique of size $\sqrt{k}$, but the order of magnitude of the gap cannot be increased further.}
\end{itemize}

\section{Induced minors and bounded tree-independence number}
\label{sec:imt-tree-independence}

Recall that there exist graphs $H$ for which the problem of deciding whether $H$ is an induced minor of an input graph $G$ is \NP-complete.
In stark contrast, in this section we show that, for any graph class $\mathcal{G}$ with bounded tree-independence number and any fixed graph $H$, the problem is solvable in polynomial time.

Let us first remark that for a fixed graph $H$, the property that a graph $G$ admits $H$ as an induced minor can be expressed as an $\mathsf{MSO}_1$ formula.\footnote{$\mathsf{MSO}_1$ logic allows for quantifications over vertices and vertex subsets of graphs, using the standard Boolean connectives and equality testing.}
Indeed, we simply look for $|V(H)|$ pairwise disjoint sets of vertices of $G$, each inducing a connected subgraph, such that two sets share an edge if and only if the corresponding vertices of $H$ are adjacent.
All of these properties can be expressed in $\mathsf{MSO}_1$, and thus in $\mathsf{CMSO}_2$, which is an extension of $\mathsf{MSO}_1$ (see~\cite{courcelle2012graph}).
In particular, the following result applies with the $\mathsf{MSO}_1$ formula $\psi$ corresponding to the containment of $H$ as an induced minor.

\begin{theorem}[Lima et al.~\cite{LMMORS24}]\label{meta theorem tin}
  For every $k,r$ and a $\mathsf{CMSO}_2$ formula $\psi$, given a graph $G$ with tree-independence number at most $k$ and a weight function $\mathfrak{w} : V(G) \to \mathbb{Q}_+$, we can find in polynomial time a set $Y \subseteq V(G)$ such that
  \begin{enumerate}
    \item $G[Y] \models \psi$,
    \item $\omega(G[Y]) \leq r$,
    \item $Y$ is of maximum weight subject to the conditions above,
  \end{enumerate}
  or conclude that no such set exists.
\end{theorem}

We can now show the main result of this section.

\begin{theorem}\label{detecting H in bounded tree-alpha}
  For every fixed graph $H$ and integer $k$, given a graph $G$ with tree-independence number at most $k$, there is a polynomial-time algorithm that either returns a minimal induced minor model of $H$ in $G$ or determines that $G$ does not contain $H$ as an induced minor.
\end{theorem}

\begin{proof}
  It is known that the class of graphs with tree-independence number at most $k$ is closed under taking induced minors (see~\cite{zbMATH07796423}), so by \cref{lem algo}, it suffices to show that we can decide in polynomial time whether $G$ contains $H$ as an induced minor.

  Note that if $G$ contains $H$ as an induced minor, then by \cref{model with bounded chi}, every minimal induced minor model of $H$ has chromatic number at most $c_H \coloneqq \max\{\min\{2|E(H)|, \allowbreak \chi(H)\Delta(H)\},1\}$ and then clique number at most $c_H$.
  By \cref{meta theorem tin} applied with $r=c_H$ and the formula $\psi$ checking the property of being an induced minor model of $H$, and $\mathfrak{w} \colon V(G) \to \{0\}$ (indeed, $\mathfrak{w}$ can be any function with codomain $\mathbb{Q}_{+}$).
  If the algorithm returns a set $Y$, then $Y$ corresponds to the union of the branch sets of a model of $H$ in $G$, and thus $G$ contains $H$ as an induced minor.
  Otherwise, if such a set $Y$ does not exist, then $G$ is $H$-induced-minor-free.
\end{proof}

\begin{remark}
  Let us denote, for a positive integer $k$, by $\mathcal{G}_k$ the class of graphs with tree-independence number at most $k$.
  Since each class $\mathcal{G}_k$ is closed under taking induced minors, it can be characterized by a unique family $\mathcal{F}_k$ of minimal induced minors.
  While $\mathcal{F}_1 = \{C_4\}$ (see~\cite{zbMATH07796423}), it is not known if for $k>1$ the sets $\mathcal{F}_k$ are finite or not.
  In this regard, we observe that for every $k\ge 4$, \Cref{detecting H in bounded tree-alpha} and the fact that the problem of recognizing graphs in  $\mathcal{G}_k$ is $\mathsf{NP}$-complete (see~\cite{DFGKM2024}), imply that, unless $\mathsf{P}=\mathsf{NP}$, the set $\mathcal{F}_k$ is infinite.
  Indeed, if for some $k\ge 4$ the set $\mathcal{F}_k$ is finite, then the problem of recognizing graphs in $\mathcal{G}_k$ is in $\mathsf{P}$, since it can be solved by the following polynomial-time algorithm.
  Given a graph $G$, we first invoke an algorithm due to Dallard et al.~from~\cite{DFGKM2024} to either conclude that the tree-independence number of $G$ is more than $k$ (in which case $G\not\in\mathcal{G}_k$), or obtain a tree decomposition of $G$ with independence number at most $8k$.
  In the latter case, since $G\in \mathcal{G}_k$ if and only if $G$ does not contain any of the graphs in $\mathcal{F}_k$ as an induced minor, and if $\mathcal{F}_k$ is finite, this latter condition can be tested in polynomial time due to \Cref{detecting H in bounded tree-alpha}.
\end{remark}

We can finally apply \Cref{detecting H in bounded tree-alpha} to graphs $H$ for which the class of $H$-induced-minor-free graphs has bounded tree-independence number.
Such graphs were characterized by Dallard, Milanič, and Štorgel~\cite{DMS_JCTB2024}, as follows.
We denote by $W_4$ the graph obtained the $4$-vertex cycle $C_4$ by adding to it a universal vertex and by $K_5^-$ the complete $5$-vertex graph minus an edge.
See \cref{fig:w4k5-k2q} for a representation of such graphs.

\begin{figure}
  \centering
  \begin{tikzpicture}[scale=0.55]
    \tikzset{every node/.style={draw,circle,fill=black,inner sep=0pt,minimum size=4.5pt}}

    \begin{scope}
      \node (a) at (0,0) {};
      \foreach \x/\y in {1/b,2/c,3/d,4/e}{
          \draw (\x*360/4: 2cm) node (\y) {};
        }
      \draw (a) to (b);
      \draw (a) to (c);
      \draw (a) to (d);
      \draw (a) to (e);
      \draw (b) to (c);
      \draw (c) to (d);
      \draw (d) to (e);
      \draw (e) to (b);
      \node[draw=none,fill=none,rectangle,label={$W_4$}] at (0,-4) {};
    \end{scope}
    \begin{scope}[xshift=7.5cm]
      \foreach \x/\y in {1/a,2/b,3/c,4/d,5/e}{
          \draw (+18+\x*360/5: 2cm) node (\y) {};
        }
      \foreach \x in {b,c,d,e}{
          \draw (a) to (\x);
        }
      \foreach \x in {c,d}{
          \draw (b) to (\x);
        }
      \foreach \x in {d,e}{
          \draw (c) to (\x);
        }
      \draw (d) to (e);
      \node[draw=none,fill=none,rectangle,label={$K_5^-$}] at (0,-4) {};
    \end{scope}
    \begin{scope}[xshift=15cm]
      \node (l) at (-2,0) {};
      \node (r) at (2,0) {};
      \foreach \y in {-2,-1,0,1,2}{
          \node (m\y) at (0,\y) {};
          \draw (l) -- (m\y) -- (r);
        }
      \node[draw=none,fill=none,rectangle,label={$K_{2,5}$}] at (0,-4) {};
    \end{scope}
  \end{tikzpicture}
  \caption{The graphs $W_4$, $K_5^-$, and $K_{2,5}$.}
  \label{fig:w4k5-k2q}
\end{figure}

\begin{theorem}[Theorem 7.3 and Remarks 3.12, 5.5, and 6.5 in~\cite{DMS_JCTB2024}]\label{W4K5-K2q im-free bounded tin}
  Let $H$ be a graph.
  Then the class of $H$-induced-minor-free graphs has bounded tree-independence number if and only if $H$ is an induced minor of $W_4$, $K_5^-$, or $K_{2,q}$ for some $q \in \mathbb{Z}_{\ge 0}$.
  Furthermore, for each such graph $H$, there is a polynomial-time algorithm that, given a graph $G$, either correctly determines that $G$ contains $H$ as an induced minor or outputs a tree-decomposition of $G$ with independence number at most $k_H$, where $k_H= 2(q-1)$ if $H$ is an induced minor of $K_{2,q}$, for some $q\ge 2$, and $k_H = 4$ if $H$ is an induced minor of $W_4$ or $K_5^-$.
\end{theorem}

\begin{corollary}\label{detecting W4K5k2q}
  Let $H$ be an induced minor of $W_4$, $K_5^-$, or $K_{2,q}$, for some $q \geq 0$.
  Then, given a graph $G$, there is a polynomial-time algorithm that either returns a minimal induced minor model of $H$ in $G$ or determines that $G$ does not contain $H$ as an induced minor.
\end{corollary}

\begin{proof}
  By \cref{lem algo}, since the class of all graphs is trivially closed under taking induced minors, it suffices to show that we can decide in polynomial time whether $G$ contains $H$ as an induced minor.
  By \Cref{W4K5-K2q im-free bounded tin}, there exists an integer $k_H$ and a polynomial-time algorithm $\mathcal{A}$ that, given a graph $G$, either correctly determines that $G$ contains $H$ as an induced minor or outputs a tree-decomposition of $G$ with independence number at most $k_H$.
  Let $G$ be a graph.
  Run the algorithm $\mathcal{A}$ on $G$.
  Then, either $\mathcal{A}$ correctly determines that $G$ contains $H$ as an induced minor, or $G$ has tree-independence number at most $k_H$, in which case \cref{detecting H in bounded tree-alpha} applies.
\end{proof}

\section{A structural approach for the case of \texorpdfstring{$K_{2,3}$}{K2,3}}
\label{sec:k23}

In this section, we provide an alternative algorithm to detect $K_{2,3}$ as an induced minor.
The algorithm relies on a characterization of $K_{2,3}$-induced minor-free graphs in terms of excluding particular induced subgraphs, called Truemper configurations, and on what is called a \emph{shortest path detector}, a method originally invented for the detection of pyramids in~\cite{MR2127609}.
More precisely, by relying on several previous works, we can easily detect three of the four Truemper configurations identified in \cref{sec:r} (namely the 3-path-configurations known as the \emph{thetas}, \emph{pyramids}, and \emph{long prisms}, the definitions are given later).
For instance, detecting $K_{2,3}$ as an induced topological minor can be equivalently stated as detecting a Truemper configuration called the \emph{theta}, a problem that can be solved in polynomial time~\cite{MR2728495,MR4141839}.
Furthermore, detecting a pyramid can also be done in polynomial time~\cite{MR2127609,MR4141839}, and we show how to detect a long prism in a pyramid-free graph by adapting an algorithm from~\cite{MR2191280}.
This is all done in \cref{sec:g}.

In \cref{sec:bw}, we give an algorithm to detect the last configuration identified in \cref{sec:r}, namely the \emph{broken wheel}, provided that the graph does not contain any of the 3-path-configurations detected in \cref{sec:g}, thus completing our algorithm.
Note that detecting a broken wheel is an \NP-complete problem, even when restricted to bipartite graphs~\cite{MR3289471} (see \cref{sec:bw} for details).

\subsection{Reducing to Truemper configurations}\label{sec:r}

In this section, we show that detecting $K_{2,3}$ as an induced minor is in fact equivalent to detecting whether some specific graphs are contained as induced subgraphs.
First, we give some definitions.
\medskip

A \emph{prism} is a graph made of three vertex-disjoint chordless paths $P_1 = a_1 \dots b_1$, $P_2 = a_2 \dots b_2$, $P_3 = a_3 \dots b_3$ of length at least 1, such that $\{a_1,a_2,a_3\}$ and $\{b_1,b_2,b_3\}$ are triangles and no edges exist between the paths except those of the two triangles.
A prism is \emph{long} if at least one of its three paths has length at least~2.

A \emph{pyramid} is a graph made of three chordless paths $P_1 = a \dots b_1$, $P_2 = a \dots b_2$, $P_3 = a \dots b_3$ of length at least~1, two of which have length at least 2, vertex-disjoint except at $a$, and such that $\{b_1,b_2,b_3\}$ is a triangle and no edges exist between the paths except those of the triangle and the three edges incident to~$a$.
We call $a$ the \emph{apex} of the pyramid.

A \emph{theta} is a graph made of three internally vertex-disjoint
chordless paths $P_1 = a \dots b$, $P_2 = a \dots b$, $P_3 = a \dots b$ of length at least~2 and such that no edges exist between the paths except the three edges incident to $a$ and the three edges incident to $b$. We call $a$ and $b$ the \emph{apices} of the theta.
Prisms, pyramids, and thetas are referred to as \emph{3-path-configurations}.

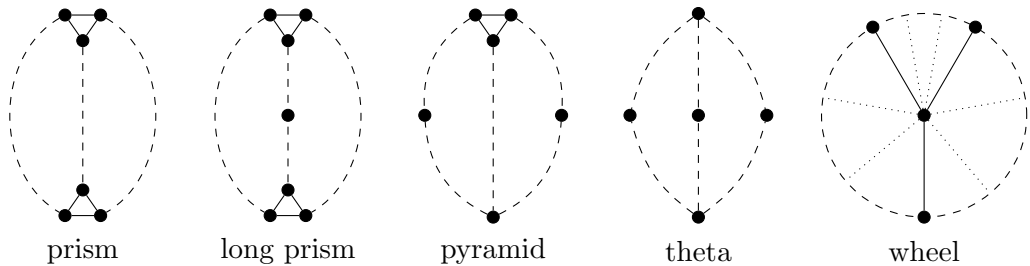
\begin{figure}[tb]
  \centering%
  \begin{tikzpicture}[scale=0.85]
    \tikzset{every node/.style={draw,circle,fill=black,inner sep=0pt,minimum size=4.5pt}}
    \begin{scope}[xshift=-7.5cm] %
      \coordinate (a) at (-1,0) {};
      \coordinate (b) at (0,0) {};
      \coordinate (c) at (1,0) {};
      \node (xa) at (100:1.5) {};
      \node (xb) at (90:1.1) {};
      \node (xc) at (80:1.5) {};
      \draw (xa) to (xb)
      (xb) to (xc)
      (xc) to (xa);
      \node (ya) at (80:-1.5) {};
      \node (yb) at (90:-1.1) {};
      \node (yc) at (100:-1-.5) {};
      \draw (ya) to (yb)
      (yb) to (yc)
      (yc) to (ya);
      \draw[dashed] (xa) to[bend right=60] (ya);
      \draw[dashed] (xb) to (yb);
      \draw[dashed] (xc) to[bend left=60] (yc);
      \node[draw=none,fill=none,rectangle] at (0,-2) {prism};
    \end{scope}
    \begin{scope}[xshift=-4.5cm] %
      \coordinate (a) at (-1,0) {};
      \node (b) at (0,0) {};
      \coordinate (c) at (1,0) {};
      \node (xa) at (100:1.5) {};
      \node (xb) at (90:1.1) {};
      \node (xc) at (80:1.5) {};
      \draw (xa) to (xb)
      (xb) to (xc)
      (xc) to (xa);
      \node (ya) at (80:-1.5) {};
      \node (yb) at (90:-1.1) {};
      \node (yc) at (100:-1-.5) {};
      \draw (ya) to (yb)
      (yb) to (yc)
      (yc) to (ya);
      \draw[dashed] (xa) to[bend right=60] (ya);
      \draw[dashed] (xb) to (yb);
      \draw[dashed] (xc) to[bend left=60] (yc);
      \node[draw=none,fill=none,rectangle] at (0,-2) {long prism};
    \end{scope}
    \begin{scope}[xshift=-1.5cm] %
      \node (a) at (-1,0) {};
      \coordinate (b) at (0,0) {};
      \node (c) at (1,0) {};
      \node (xa) at (100:1.5) {};
      \node (xb) at (90:1.1) {};
      \node (xc) at (80:1.5) {};
      \draw (xa) to (xb)
      (xb) to (xc)
      (xc) to (xa);
      \node (y) at (0,-1.5) {};
      \draw[dashed] (xa) to[bend right] (a);
      \draw[dashed] (xb) to (y);
      \draw[dashed] (xc) to[bend left] (c);
      \draw[dashed] (y) to[bend left] (a);
      \draw[dashed] (y) to[bend right] (c);
      \node[draw=none,fill=none,rectangle] at (0,-2) {pyramid};
    \end{scope}
    \begin{scope}[xshift=1.5cm] %
      \node (a) at (-1,0) {};
      \node (b) at (0,0) {};
      \node (c) at (1,0) {};
      \node (x) at (0,1.5) {};
      \node (y) at (0,-1.5) {};
      \draw[dashed] (x) to[bend right=20] (a);
      \draw[dashed] (x) to (b);
      \draw[dashed] (x) to[bend left=20] (c);
      \draw[dashed] (y) to[bend left=20] (a);
      \draw[dashed] (y) to (b);
      \draw[dashed] (y) to[bend right=20] (c);
      \node[draw=none,fill=none,rectangle] at (0,-2) {theta};
    \end{scope}
    \begin{scope}[xshift=4.8cm]
      \draw[dashed] circle (1.5) {};
      \node (x) at (0,0) {};
      \node (a) at (120:1.5) {};
      \node (b) at (60:1.5) {};
      \node (c) at (-90:1.5) {};

      \coordinate (ab1) at (100:1.5);
      \coordinate (ab2) at (80:1.5);
      \coordinate (bc1) at (10:1.5);
      \coordinate (bc2) at (-50:1.5);
      \coordinate (ac1) at (-140:1.5);
      \coordinate (ac2) at (-190:1.5);

      \foreach \y in {a,b,c}{
          \draw (x) to (\y);
        }
      \foreach \y in {ab1,ab2,bc1,bc2,ac1,ac2}{
          \draw[dotted] (x) to (\y);
        }
      \node[draw=none,fill=none,rectangle] at (0,-2) {wheel};
    \end{scope}
  \end{tikzpicture}
  \caption{A schematic representation of a prism, a long prism, a pyramid, a theta, and a wheel.
    Dashed edges represent paths of length at least $1$.
    Dotted edges may be present or not.}\label{f:tc}
\end{figure}

A \emph{hole} in a graph is a chordless cycle of length at least~4.
Observe that the lengths of the paths  in the definitions of prisms, pyramids, and thetas are designed so that the union of any two of the paths induces a hole.
A \emph{wheel} $W= (H,x)$ is a graph formed by a hole $H$ (called the \emph{rim}) together with a vertex $x$ (called the \emph{center}) that has at least three neighbors in the hole.
Prism, pyramids, thetas, and wheels are referred to as \emph{Truemper configurations}; see \cref{f:tc}.

A \emph{sector} of a wheel $(H,x)$ is a path $P$ that is contained in $H$, whose endpoints are neighbors of $x$ and whose internal vertices are not.
Note that $H$ is edgewise partitioned into the sectors of $(H,x)$.
Also, every wheel has at least three sectors.
A wheel is \emph{broken} if at least two of its sectors have length at least~2.

We denote by $\mathcal{S}$ the class of subdivisions of the graph $K_{1,3}$; see \cref{fig:S-T-M}.
The class $\mathcal{T}$ is the class of graphs that can be obtained from three paths of length at least one by selecting one endpoint of each path and adding three edges between those endpoints so as to create a triangle.
The class $\mathcal{M}$ is the class of graphs $H$ that consist of a path $P$ and a vertex $a$, called the \emph{center} of $H$, such that $a$ is nonadjacent to the endpoints of $P$ and $a$ has at least two neighbors in $P$.
Given a graph $H\in \mathcal{S}\cup\mathcal{T}\cup\mathcal{M}$, the \emph{extremities} of $H$ are the vertices of degree one as well as the center of $H$ in case $H\in\mathcal{M}$.
Observe that any $H\in \mathcal{S}\cup\mathcal{T}\cup\mathcal{M}$ has exactly three extremities.

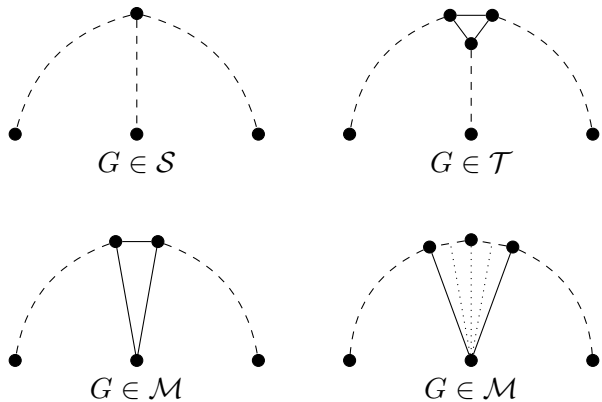
\begin{figure}[htb]
  \centering
  \begin{tikzpicture}[scale=0.8]
    \tikzset{every node/.style={draw,circle,fill=black,inner sep=0pt,minimum size=4.5pt}}
    \tikzset{decoration={amplitude=.5mm,segment length=2.25mm,post length=0mm,pre length=0mm}}
    \begin{scope}[xshift=-5.5cm]
      \node (b) at (2,0) {};
      \node (c) at (0,0) {};
      \node (d) at (-2,0) {};
      \node (a) at (0,2) {};

      \draw[dashed, bend left] (a) to (b);
      \draw[dashed] (a) to (c);
      \draw[dashed, bend right] (a) to (d);

      \node[rectangle,draw=none,fill=none] at (0,-.5) {$\strut G \in \mathcal{S}$};
    \end{scope}
    \begin{scope}[]
      \node (b) at (2,0) {};
      \node (c) at (0,0) {};
      \node (d) at (-2,0) {};
      \node (ac) at (90:1.5) {};
      \node (ab) at (80:2) {};
      \node (ad) at (100:2) {};

      \draw (ac) to (ab)
      (ac) to (ad)
      (ad) to (ab);

      \draw[dashed, bend left] (ab) to (b);
      \draw[dashed] (ac) to (c);
      \draw[dashed, bend right] (ad) to (d);

      \node[rectangle,draw=none,fill=none] at (0,-.5) {$\strut G \in \mathcal{T}$};
    \end{scope}
    \begin{scope}[xshift=-5.5cm,yshift=-3.75cm]
      \node (b) at (2,0) {};
      \node (c) at (0,0) {};
      \node (d) at (-2,0) {};
      \node (ab) at (80:2) {};
      \node (ad) at (100:2) {};

      \draw (ab) to (ad);

      \draw[dashed, bend left] (ab) to (b);
      \draw (c) to (ab)
      (c) to (ad);
      \draw[dashed, bend right] (ad) to (d);

      \node[rectangle,draw=none,fill=none] at (0,-.5) {$\strut G \in \mathcal{M}$};
    \end{scope}
    \begin{scope}[yshift=-3.75cm]
      \node (b) at (2,0) {};
      \node (c) at (0,0) {};
      \node (d) at (-2,0) {};
      \node (ac) at (90:2) {};
      \node (ab) at (70:2) {};
      \node (ad) at (110:2) {};

      \draw[dashed] (ab) to (ac)
      (ac) to (ad);
      \draw[dashed, bend left] (ab) to (b);
      \draw (c) to (ab)
      (c) to (ad);
      \draw[dashed, bend right] (ad) to (d);
      \draw[dotted] (ac) to (c);
      \coordinate (adc) at (80:2);
      \coordinate (abc) at (100:2);
      \draw[dotted] (c) to (adc)
      (c) to (abc);
      \node[rectangle,draw=none,fill=none] at (0,-.5) {$\strut G \in \mathcal{M}$};
    \end{scope}
  \end{tikzpicture}
  \caption{A schematic representation of graphs in $\mathcal{S}$, $\mathcal{T}$, and $\mathcal{M}$.
    Dashed edges represent paths of length at least $1$.
    Dotted edges may be present or not.}
  \label{fig:S-T-M}
\end{figure}

The key ingredient in our proof of \cref{t:k23T} is the following sufficient condition for the presence of an induced subgraph from $\mathcal{S} \cup \mathcal{T} \cup \mathcal{M}$ (which is also used later, in the proof of \cref{l:longP}).
This observation has been made many times in different contexts with slight variants; for instance, it is implicit in~\cite{watkinsMesner:cycle} and appears more explicitly in~\cite[Lemma~3.3]{maffray.t:reco} and \cite[Lemma 5.1]{MR4660624}.
For completeness, and to align with our notation, we provide a proof.

\begin{lemma}\label{l:con3}
  Let $G$ be a graph and $I$ be an independent set in $G$ with $|I| = 3$.
  If there exists a component $C$ of $G \setminus I$ such that $I \subseteq N(C)$, then there exists an induced subgraph $H$ of $G[N[C]]$ such that $H \in \mathcal{S} \cup \mathcal{T} \cup \mathcal{M}$ and $I$ is exactly the set of extremities of $H$.
\end{lemma}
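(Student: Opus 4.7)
The plan is to let $Y \subseteq V(C)$ be an inclusion-minimal subset such that $G[Y]$ is connected and every $x_i \in I$ has a neighbor in $Y$; such a $Y$ exists since $V(C)$ itself works. I then take $H := G[Y \cup I]$, which is an induced subgraph of $G[N[C]]$ because $Y \cup I \subseteq V(C) \cup N(C) = N[C]$. The minimality of $Y$ yields the pivotal dichotomy: each $v \in Y$ is either a cut vertex of $G[Y]$, or the unique neighbor in $Y$ of some $x_i$ (call such $v$ \emph{private}); otherwise $Y \setminus \{v\}$ would still satisfy both conditions. Since $|I|=3$, there are at most three distinct private vertices, and hence at most three non-cut vertices in $G[Y]$.

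Using this budget of three, I would analyze the block-cut tree of $G[Y]$ and prove that every $2$-connected block of $G[Y]$ is a triangle and that $G[Y]$ contains at most one such triangle. The key counting step: in a leaf block $B$ of the block-cut tree, only one vertex can be a cut vertex of $G[Y]$, so $|B|-1$ of its vertices are non-cut and hence private; this gives $|B| \le 4$. A direct check on the three $4$-vertex $2$-connected candidates ($C_4$, the diamond, $K_4$) rules them out: consuming all three privates inside such a $B$ forces the rest of $G[Y]$ to consist only of cut vertices, which eventually contradicts either the existence of a leaf in the rest or the $2$-connectedness of $B$. An analogous count forbids two triangle blocks, since two triangle leaf blocks alone would require four private vertices. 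Combining these, $G[Y]$ is either a tree or a single triangle $\{t_1,t_2,t_3\}$ with chordless paths hanging off its cut vertices (the attached components are paths by the same budget argument applied to their leaves).

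The proof then splits into two cases. If $G[Y]$ is a tree, its non-cut vertices are precisely its leaves, so $G[Y]$ has at most three leaves and is a single vertex, a path $v_0 \ldots v_k$, or a subdivided $K_{1,3}$. The single-vertex case gives $H = K_{1,3} \in \mathcal{S}$; the subdivided $K_{1,3}$ case gives a larger subdivided $K_{1,3} \in \mathcal{S}$; and the path case splits depending on whether the remaining $x_i$ has exactly one neighbor on the path (yielding $H \in \mathcal{S}$) or at least two neighbors (yielding $H \in \mathcal{M}$ with path $x_1 v_0 \ldots v_k x_2$ and center $x_3$). If $G[Y]$ contains the triangle, then combining the non-cut vertices of the triangle with the private leaves of the attached paths accounts for exactly three private vertices, each adjacent to a distinct $x_i$, and $H$ becomes a triangle with three chordless paths reaching $x_1, x_2, x_3$, an element of $\mathcal{T}$. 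In every case, inspecting vertex degrees (or identifying the center, for $\mathcal{M}$) confirms that the extremities of $H$ are exactly $I$.

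The main obstacle I anticipate is the block-structure step: the counting has to be coordinated globally across the block-cut tree, because a private vertex consumed inside one block eats into the budget available for leaves of other blocks, and one has to rule out both $2$-connected blocks of size $\ge 4$ and coexisting nontrivial cycles before the clean tree-or-one-triangle description becomes available.
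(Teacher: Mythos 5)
Your argument is correct, and it takes a genuinely different route from the paper. The paper constructs $H$ directly: it takes a chordless path $P$ from $a_1$ to $a_2$ through $C$, then a chordless path $P'$ from $a_3$ to a vertex $u$ with a neighbor in the interior of $P$, minimizes $|V(P\cup P')|$, and runs a short case analysis on how $P'$ attaches to $P$ and whether $a_1,a_2$ see $P'$. You instead take an inclusion-minimal connected $Y\subseteq C$ dominating $I$ and exploit the dichotomy that every vertex of $Y$ is a cut vertex of $G[Y]$ or the unique $Y$-neighbor of some member of $I$; this bounds the number of non-cut vertices by three and forces $G[Y]$ to be a tree with at most three leaves or a triangle with pendant paths, after which $H=G[Y\cup I]$ falls into $\mathcal{S}\cup\mathcal{T}\cup\mathcal{M}$ essentially by inspection. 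I checked the block-tree counting you flag as the main obstacle and it does close: each leaf block $B$ contributes $|B|-1$ non-cut vertices and each internal block $B$ with $t$ cut vertices contributes $|B|-t$ while spawning $t$ disjoint subtrees each containing a leaf block, so any block on four or more vertices, or any second non-edge block, pushes the count past three; note also that in every configuration with exactly three non-cut vertices the three private vertices are owned by three distinct members of $I$ (one vertex cannot be the unique neighbor of a given $x_i$ twice), which is what guarantees the clean $\mathcal{S}$/$\mathcal{T}$ outcomes, while the two-non-cut (path) case leaves one $x_i$ free to have several neighbors and is exactly where $\mathcal{M}$ arises. Your approach buys a transparent global picture of the connector $Y$ and makes the ``at most three extremities'' phenomenon visible as a counting fact; the paper's two-path construction avoids the block decomposition entirely and is shorter, at the cost of a less structural case analysis. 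The one point to be careful about when writing yours up fully is that the $\mathcal{M}$ center may be adjacent to the endpoints $v_0,v_k$ of $G[Y]$'s path; this is harmless because the relevant path in the definition of $\mathcal{M}$ is the extended path $x_iv_0\ldots v_kx_j$, whose endpoints lie in the independent set $I$, but it should be said explicitly.
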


\begin{proof}
  Let $I = \{a_1, a_2, a_3\}$ and assume that there exists a component $C$ of $G \setminus I$ such that $I \subseteq N(C)$.
  Let $P$ be a shortest path from $a_1$ to $a_2$ in $G[C\cup \{a_1,a_2\}]$.
  Consider a shortest path $P'$ in $G[C\cup \{a_3\}]$ from $a_3$ to the neighborhood of the interior of $P$.
    Let $u$ be the other endpoint of $P'$ (possibly $u = a_3$).
  Let $H$ be the subgraph of $G$ induced by $P\cup P'$.
  Note that $H$ is an induced subgraph of $G[C\cup \{a_1,a_2,a_3\}]$.

  Let $v_1$ (resp.\ $v_2$) be the neighbor of $u$ in $P$ closest to $a_1$ (resp.\ $a_2$) along $P$.

  Assume first that $u=a_3$.
  Then, $H$ belongs to $\mathcal S$ (if $v_1=v_2$) or to $\mathcal M$ (if $v_1\neq v_2$), in which case the center of $H$ is $a_3$.
  Note that in both cases, the set of extremities of $H$ is $\{a_1,a_2,a_3\} = I$.
  Hence, from here on we may assume that $u\neq a_3$, so $P'$ has length at least 1.

  If both $a_1$ and $a_2$ have neighbors in $P'$, let $w_i$ be the neighbor of $a_i$ in $P'$ closest to $a_3$ along $P'$ for $i\in \{1,2\}$.
    Without loss of generality, we can assume that in $P'$, vertex $w_1$ is at least as close to $a_3$ as $w_2$.
    Let $Q\coloneqq a_3P'w_2a_2$.
    Then $Q$ is a chordless path between $a_3$ and $a_2$ such that $a_1$ has at least one neighbor in the interior of $Q$, namely $w_1$.
    Hence, $Q\cup \{a_1\}$ induces a subgraph $H$ of $G$ that belongs to either $\mathcal{S}$ or $\mathcal{M}$, depending on whether $a_1$ has one neighbor or more in $Q$.
  Hence, from here on, we assume that $a_1$ has no neighbors in~$P'$.

  Assume that $a_2$ has neighbors in $P'$.
  Then, $Q\coloneqq a_1Pv_1uP'a_3$ is a chordless path  between $a_1$ and $a_3$ such that $a_2$ has at least one neighbor in the interior of $Q$.
    Therefore, $Q\cup\{a_2\}$ induces a subgraph $H$ of $G$ that belongs to either $\mathcal S$ or to $\mathcal M$, depending on whether $a_2$ has one neighbor or more in $Q$.
  Hence, from here on, we assume that $a_2$ has no neighbor in~$P'$.

  If $v_1=v_2$, then $H$ belongs to $\mathcal{S}$ and $I$ is the set of its  extremities.
  If $v_1v_2\in E(G)$, then $H$ belongs to $\mathcal{T}$, with triangle $\{u,v_1,v_2\}$ and the set of extremities $I$.
  So, we may assume that $v_1\neq v_2$ and $v_1v_2\notin E(G)$.
  In this case, $Q\coloneqq a_1Pv_1uv_2Pa_2$ is a chordless path between $a_1$ and $a_2$ such that $Q\cup P'$ induces a subgraph $H$ of $G$ that belongs to $\mathcal S$.
\end{proof}

\begin{theorem}\label{t:k23T}
  A graph contains $K_{2, 3}$ as an induced minor if and only if it contains a long prism, a pyramid, a theta, or a broken wheel as an induced subgraph.
\end{theorem}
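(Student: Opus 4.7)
The plan is to prove the two directions of the equivalence separately; the substantive part is the forward implication, where I will apply \cref{l:con3} twice, once on each side of the $K_{2,3}$-model.

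For the reverse implication, I would exhibit an explicit induced-minor model of $K_{2,3}$ inside each of the four configurations. A theta is itself a subdivision of $K_{2,3}$, so it contains $K_{2,3}$ as an induced topological minor and hence as an induced minor. For a pyramid one can use $X_1=\{a\}$, $X_2=\{b_1,b_2\}$, $Y_i$ the interior of $P_i$ for $i\in\{1,2\}$, and $Y_3=V(P_3)\setminus\{a\}$. For a long prism the explicit model depends on how many of the three paths are long; for instance, assuming $P_1$ long and $P_2, P_3$ of length~$1$, one may take $X_1=\{a_1,a_2\}$, $X_2=\{b_1,b_3\}$, $Y_1=\{a_3\}$, $Y_2=\{b_2\}$, and $Y_3$ the interior of $P_1$, while the cases with two or three long paths are similar or easier. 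For a broken wheel with two long sectors, one such model takes a whole sector as $X_1$, a neighbor of $x$ on another sector as $X_2$, and the center $x$ together with interior or endpoint vertices of the remaining sectors as the $Y_i$. Each case reduces to a routine adjacency check using the no-extra-edges clauses of the definitions.

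For the forward implication, fix an induced-minor model $\{A,B,Y_1,Y_2,Y_3\}$ of $K_{2,3}$ in $G$, with $A,B$ the two branch sets on the size-$2$ side. For each $i$, choose $a_i\in Y_i$ with a neighbor in $A$, $b_i\in Y_i$ with a neighbor in $B$ (possibly $a_i=b_i$), and a chordless path $Q_i$ inside $Y_i$ from $a_i$ to $b_i$. The set $I_A=\{a_1,a_2,a_3\}$ is independent because no edges run between distinct $Y_j$'s, and the component $C_A$ of $G\setminus I_A$ containing $A$ satisfies $I_A\subseteq N(C_A)$; \cref{l:con3} therefore yields an induced subgraph $H_A\in\mathcal{S}\cup\mathcal{T}\cup\mathcal{M}$ of $G[N[C_A]]$ whose set of extremities is exactly $I_A$. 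Symmetrically one obtains $H_B$ from $I_B=\{b_1,b_2,b_3\}$. A case analysis on the type pair $(H_A,H_B)$ then yields the four target configurations. When both lie in $\mathcal{S}$, gluing the two apices via the three routes (apex-to-$a_i$ through $H_A$, then $Q_i$, then $b_i$-to-apex through $H_B$) produces three internally disjoint chordless paths of length at least $1+0+1=2$ between two vertices, i.e.\ a theta. The mixed case $(\mathcal{S},\mathcal{T})$ gives a pyramid by the same gluing, and $(\mathcal{T},\mathcal{T})$ gives a prism whose three paths each have length at least~$2$, hence a long prism. When $H_A$ or $H_B$ lies in $\mathcal{M}$, the center together with its two guaranteed neighbors on the corresponding path and an induced hole built from that path, two of the $Q_i$'s, and the opposite-side structure realises a wheel in which the two sectors on either side of the central neighbors on the path have length $\geq 2$, so the wheel is broken.

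The main technical obstacle is to ensure the final configuration is \emph{induced}, with no unwanted chords inside $H_A, H_B$ or across the glued paths, and in the $\mathcal{M}$ case to verify that the center really has at least three neighbors on the constructed hole (the third neighbor coming from following $Q_3$ and the relevant $H_B$-path). I would address both by choosing $H_A, H_B$ and the $Q_i$'s minimally, in the same spirit as the $|V(H)|$-minimality argument used in the proof of \cref{l:con3}, so that any additional chord would contradict the chordlessness of a chosen path or this minimality; the pairwise non-adjacency of the $Y_i$'s and of $A,B$ imposed by the model then rule out the remaining potential chords between different parts of the construction.
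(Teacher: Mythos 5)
Your overall strategy for the forward direction (apply \cref{l:con3} on each side of the model, then do a case analysis on the resulting types) is the same as the paper's, but you are missing the one step that makes the gluing work. The paper first takes a model $\{X_u,X_v,X_a,X_b,X_c\}$ minimizing the total number of vertices and, subject to that, $|X_a\cup X_b\cup X_c|$, and proves (\cref{claim:minmodel}) that this forces $X_a,X_b,X_c$ to be singletons. Consequently there is a \emph{single} independent set $I$ of size $3$ shared by both sides, the graph induced by the model minus $I$ has exactly the two components $X_u$ and $X_v$, and the two subgraphs $H=G[X_u\cup I]$ and $H'=G[X_v\cup I]$ meet exactly in $I$ and are, again by minimality of the model, themselves members of $\mathcal{S}\cup\mathcal{T}\cup\mathcal{M}$ rather than merely containing such members. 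In your version none of this is available: you take $C_A$ to be a component of $G\setminus I_A$ (not even of the subgraph induced by the model), so $H_A$ may use vertices of $B$, of $H_B$, or of the paths $Q_i$; an interior vertex of $Q_i$ may be adjacent to $A$ or to vertices of $H_A$, since the model only forbids edges \emph{between distinct branch sets}; and the concatenated walks need not be chordless. Deferring all of this to an unspecified minimality ``in the same spirit as'' the lemma is precisely the missing content --- the concrete idea you need is to put the minimality on the model itself so that the three small branch sets collapse to single vertices.

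Second, your case analysis is incorrect when one of the two pieces is in $\mathcal{M}$. You claim this always ``realises a wheel'' and that the center's third neighbour on the hole comes from $Q_3$ and $H_B$, but nothing guarantees the center of the $\mathcal{M}$-piece has any neighbour outside its own path: it may have exactly two neighbours on the constructed hole, in which case one obtains a theta or a pyramid, not a wheel. This is exactly the trichotomy in the paper's $\mathcal{M}$-versus-$\mathcal{S}$ case (a pyramid if $c_1c_3\in E(G)$, a broken wheel if $c_1c_3\notin E(G)$ and $a_2u\in E(G)$, and a theta otherwise). The theorem's conclusion survives because these outcomes are also on the target list, but as written you assert a false intermediate claim and omit the $(\mathcal{M},\mathcal{T})$ and $(\mathcal{M},\mathcal{M})$ subcases. (The reverse direction is fine in substance, though your pyramid model needs $P_1$ and $P_2$ to be the two paths of length at least~$2$; otherwise $X_1=\{a\}$ and $X_2=\{b_1,b_2\}$ are adjacent and $Y_1$ may be empty.)
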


\begin{proof}
  It is easy to check that a long prism, a pyramid, a theta, or a broken wheel all contain $K_{2,3}$ as an induced minor.

  Conversely, assume that a graph $G$ contains $K_{2,3}$ as an induced minor.
  Let us denote the vertices of $K_{2,3}$ as $\{u,v,a,b,c\}$ so that $\{u,v\}$ and $\{a,b,c\}$ form a bipartition of $K_{2,3}$.

  By \cref{lem: min model deg 2}, there exists $\X= \{X_u,X_v,X_a,X_b,X_c\}$ a minimal induced minor model of $K_{2,3}$ in $G$ such that $X_a$, $X_b$, and $X_c$ each contains only one vertex.
  Then $I = X_a\cup X_b \cup X_c$ is an independent set of size 3 in the subgraph $G'$ of $G$ induced by $\X$, that is, $G' = G[\X]$.
  Moreover, $I$ is contained in the neighborhood of each of the two connected components $X_u$ and $X_v$ of $G' \setminus I$.
  Consider the graphs $H = G[X_u \cup I]$ and $H' = G[X_v \cup I]$. Note in particular that $G'=G[V(H) \cup V(H')]$.
  By \cref{l:con3} and minimality of $\X$, the graphs $H$ and $H'$ belong to $\mathcal{S}\cup \mathcal{T}\cup \mathcal{M}$, since otherwise we could find a proper induced subgraph of one of them that is in $\mathcal{S}\cup \mathcal{T}\cup \mathcal{M}$, yielding an induced minor model of $K_{2,3}$ in $G$ smaller than~$\X$, contradicting the minimality of~$\X$.
  Moreover, the extremities of both $H$ and $H'$ are exactly $I=\{a_1,a_2,a_3\}$.

  \begin{claim}
    The graph $G' = G[V(H) \cup V(H')]$ contains a long prism, a pyramid, a theta, or a broken wheel as an induced subgraph.
  \end{claim}

  \begin{proofclaim}
    Assume first that $H \in \mathcal{M}$.
    W.l.o.g., we may assume that $a_2$ is the center of $H$.
    Let $P$ be the path from $a_1$ to $a_3$ in $H$ that does not contain $a_2$.
    Let $c_1$ and $c_3$ be the vertices of $P$ adjacent to $a_2$ that are closest to $a_1$ and $a_3$ along $P$, respectively.

    If $H' \in \mathcal{S}$, then let $u$ be the unique vertex of degree~3 in $H'$; see \cref{fig:cases-short-proof}.
    Observe that either $G'$ is a pyramid (if $c_1c_3\in E(G)$) or a broken wheel (if $c_1c_3\notin E(G)$ and $a_2u \in E(G)$), or $G'$ contains a theta with apices $a_2$ and $u$ (whenever $c_1c_3\notin E(G)$ and $a_2u\notin E(G)$).

    If $H' \in \mathcal{T}$, then notice that either $G'$ is a long prism (if  $c_1c_3\in E(G)$) or $G'$ contains a pyramid with apex $a_2$ (if  $c_1c_3\notin E(G)$).

    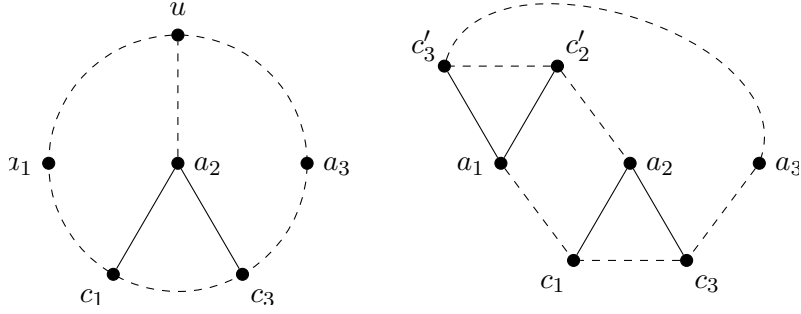
\begin{figure}
      \centering
      \begin{tikzpicture}[scale=0.85]
        \tikzset{every node/.style={draw,circle,fill=black,inner sep=0pt,minimum size=4.5pt}}
        \begin{scope}
          \draw[dashed] circle (2) {};

          \node[label=right:{$\strut a_2$}] (x) at (0,0) {};
          \node[label=left:{$\strut a_1$}] (y) at (180:2) {};
          \node[label=right:{$\strut a_3$}] (z) at (0:2) {};
          \node[label=below left:{$\strut c_1$}] (a) at (-120:2) {};
          \node[label=below right:{$\strut c_3$}] (b) at (-60:2) {};
          \node[label=above:{$\strut u$}] (c) at (90:2) {};

          \foreach \y in {a,b}{
              \draw (x) to (\y);
            }

          \draw[dashed] (x) to (c);

        \end{scope}
        \begin{scope}[xshift=7cm]
          \node[label=right:{$\strut a_2$}] (a2) at (0,0) {};
          \node[label=left:{$\strut a_1$}] (a1) at (180:2) {};
          \node[label=right:{$\strut a_3$}] (a3) at (0:2) {};

          \node[label=above left:{$\strut c_3'$}] (u) at ($(a1)+(120:1.75)$) {};
          \node[label=above right:{$\strut c_2'$}] (v) at ($(a1)+(60:1.75)$) {};

          \draw (a1) to (u)
          (a1) to (v);
          \draw[dashed] (v) to (u);

          \node[label=below left:{$\strut c_1$}] (u') at ($(a2)+(-120:1.75)$) {};
          \node[label=below right:{$\strut c_3$}] (v') at ($(a2)+(-60:1.75)$) {};

          \draw (a2) to (u')
          (a2) to (v');
          \draw[dashed] (v') to (u');

          \draw[dashed] (u) to[bend left=90] (a3);
          \draw[dashed](v) to (a2);
          \draw[dashed] (a1) to (u');
          \draw[dashed] (a3) to (v');
        \end{scope}
      \end{tikzpicture}
      \caption{Illustration of two cases in the proof of \cref{t:k23T}: on the left, $H \in \mathcal{M}$ and $H' \in \mathcal{S}$; on the right, $H, H' \in \mathcal{M}$.
        Dashed edges represent paths of length at least $1$.}
      \label{fig:cases-short-proof}
    \end{figure}

    If $H' \in \mathcal{M}$ and $H'$ is centered at $a_2$, then $G'$ is a broken wheel.
    So, assume w.l.o.g.\ that $H'$ is centered at $a_1$; see \cref{fig:cases-short-proof}.
    Let $P'$ be the path from $a_2$ to $a_3$ in $H'$ that does not contain $a_1$.
    Let $c_2'$ and $c_3'$ be the vertices of $P'$ adjacent to $a_1$ that are closest to $a_2$ and $a_3$ along $P'$, respectively.
    If $c_1c_3$ and $c'_2c'_3$ are both edges of $G$, then $G'$ is a long prism with triangles $\{a_2,c_1,c_3\}$ and $\{a_1,c'_2,c'_3\}$.
    So we may assume w.l.o.g.\ that $c_1c_3\notin E(G)$.
    If $c'_2c'_3\in E(G)$, then $G'$ contains a pyramid with apex $a_2$ and triangle $\{a_1,c'_2,c'_3\}$.
    If $c'_2c'_3\notin E(G)$, then $G'$ contains a theta with apices $a_1$ and $a_2$.

    Hence, we may assume that $H \notin \mathcal{M}$ and symmetrically $H' \notin \mathcal{M}$.

    Assume that $H \in \mathcal{S}$.
    If $H' \in \mathcal{S}$, then $G'$ is a theta and if $H' \in \mathcal{T}$, then $G'$ is a pyramid.
    Hence, we may assume that $H \notin \mathcal{S}$ and symmetrically $H' \notin \mathcal{S}$.

    We are left with the case where both $H$ and $H'$ belong to $\mathcal{T}$, which implies that $G'$ is a long prism.
  \end{proofclaim}
  This completes the proof of \cref{t:k23T}.
\end{proof}

\subsection{Detecting 3-path-configurations}
\label{sec:g}

The first polynomial-time algorithm to detect a pyramid was proposed by Chudnovsky and Seymour~\cite{MR2127609}.
A faster algorithm was obtained by Lai, Lu, and Thorup~\cite[Theorem 1.3]{MR4141839}.
The $\widetilde{O}(\cdot)$ notation omits polylogarithmic factors.

\begin{theorem}\label{th:pyramid}
  Determining whether a given graph contains a pyramid as an induced subgraph can be done in time $\widetilde{O}(n^5)$.
\end{theorem}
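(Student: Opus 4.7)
The plan is to adapt the \emph{shortest-path detector} technique of Chudnovsky and Seymour, and then to optimise it carefully to reach the stated running time. Recall the structure: a pyramid has an apex $a$, a triangle $\{b_1,b_2,b_3\}$, and three chordless paths $P_i$ from $a$ to $b_i$, with no edges between distinct paths other than those incident to $a$ and the triangle edges, and with at least two paths of length $\geq 2$. For each $i$, let $a_i$ denote the neighbour of $a$ on $P_i$ and, when $P_i$ has length $\geq 2$, let $m_i$ denote the neighbour of $b_i$ on $P_i$.

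First I would enumerate candidate tuples of these distinguished vertices. A natural first attempt is to guess $(a,b_1,b_2,b_3,a_1,a_2,a_3)$, which already costs $\Theta(n^7)$ and so is too slow. The refinement is to guess only the apex $a$, the triangle $\{b_1,b_2,b_3\}$, and, for each long path $P_i$, the single ``far end'' neighbour $m_i$ of $b_i$. Since at least two paths are long, this yields $O(n^6)$ tuples after appropriate case analysis on which paths are short. For each tuple I would verify the local adjacency conditions ($\{b_1,b_2,b_3\}$ is a triangle, $a\not\sim b_j$, $m_i\sim b_i$, $m_i\not\sim a$, $m_i\not\sim b_j$ for $j\neq i$, etc.).

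Given a valid guess, I would build an auxiliary graph $G_i$ for each $i$ by deleting from $G$ all vertices that are forbidden from lying in the interior of $P_i$, namely $a$, the $b_j$ and $m_j$ for $j\neq i$, and every vertex adjacent to one of $\{a\}\cup\{b_j,m_j:j\neq i\}$ (with the natural exceptions allowed for the two endpoints $a_i$ and $m_i$). Then for each neighbour $a_i$ of $a$ surviving in $G_i$, I would compute a shortest path $Q_i$ from $a_i$ to $m_i$ in $G_i$. A standard argument shows that such a shortest path is automatically chordless, and by minimality it has no unwanted attachment to the other two paths; hence if a pyramid exists for the guessed tuple, the three shortest paths $a\text{-}a_i\text{-}Q_i\text{-}m_i\text{-}b_i$ together with $a$ and the triangle form a pyramid. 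Finally I would run a verification in $O(n+m)$ to confirm that the union is an induced pyramid.

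The main obstacle is shaving the complexity down to $\widetilde O(n^5)$. A direct implementation of the above pays $O(n^6)$ for the enumeration and $O(n+m)$ per guess for the shortest paths, giving $O(n^8)$ overall. To reach $\widetilde O(n^5)$ one must both reduce the number of guesses and batch the shortest-path computations. The essential insights are (i) some of the ``outer'' vertices of the pyramid do not need to be guessed individually because they can be recovered from the shortest paths themselves, which trims the enumeration to $O(n^4)$ or $O(n^5)$ tuples, and (ii) once $a$ and the triangle are fixed, many candidate far-ends can be treated simultaneously by exploiting all-pairs-shortest-path data structures (in the spirit of Lai--Lu--Thorup, using fast matrix multiplication or clever partitioning) rather than recomputing from scratch for each choice. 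Getting the bookkeeping right — in particular ensuring that ``shortest'' really forces the path to avoid the other two prospective paths, and that the batched computation correctly enforces the induced-subgraph constraints — is the delicate part and the reason the improvement from the Chudnovsky--Seymour algorithm to $\widetilde O(n^5)$ required a separate paper.
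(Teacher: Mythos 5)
The paper does not prove this statement at all: it is imported verbatim from the literature (the polynomial-time algorithm is due to Chudnovsky and Seymour, and the $\widetilde{O}(n^5)$ bound to Lai, Lu, and Thorup), so the ``proof'' in the paper is a citation. Your attempt to reconstruct the argument from scratch therefore has to be judged as a proof of the external result, and as such it has a genuine gap in two places. First, the claim that a shortest path $Q_i$ from $a_i$ to $m_i$ in the auxiliary graph $G_i$ ``by minimality has no unwanted attachment to the other two paths'' is false as stated: the three paths are computed independently, each only avoiding the neighbourhoods of the \emph{guessed} vertices of the other two paths, not the paths themselves, so nothing prevents $Q_1$ and $Q_2$ from sharing vertices or having edges between their interiors. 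Handling exactly this interaction is the heart of the Chudnovsky--Seymour pyramid algorithm: one must argue about an \emph{optimal} (e.g.\ vertex-minimal) pyramid, show that for a suitable choice of the guessed vertices the independently computed shortest paths are forced to be pairwise compatible, and back this up with a nontrivial case analysis. Your sketch asserts this compatibility rather than proving it.

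Second, the complexity claim is not established. You correctly compute that the naive implementation costs far more than $\widetilde{O}(n^5)$, but the two ``essential insights'' you offer for closing the gap --- trimming the enumeration and batching the shortest-path computations --- are described only at the level of intentions (``in the spirit of Lai--Lu--Thorup''), and you yourself concede that carrying them out ``required a separate paper.'' A proof that ends by deferring its central quantitative step to an unexecuted plan is not a proof of the $\widetilde{O}(n^5)$ bound. For the purposes of this paper the honest and sufficient move is simply to cite \cite{MR2127609} for polynomiality and \cite{MR4141839} for the stated running time; if you want to include an argument, you would need to either reproduce the full Lai--Lu--Thorup analysis or settle for proving a weaker (but still polynomial) bound, which would still suffice for the paper's main theorem since the overall running time is dominated by the $\mathcal{O}(n^{13}(n+m))$ broken-wheel step.
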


Detecting a theta in polynomial time was first done by Chudnovsky and Seymour~\cite{MR2728495}.
A faster method was later discovered by Lai, Lu, and Thorup~\cite[Theorem 1.2]{MR4141839}.

\begin{theorem}
  \label{th:theta}
  Determining whether a given graph contains a theta as an induced subgraph can be done in time $\widetilde{O}(n^6)$.
\end{theorem}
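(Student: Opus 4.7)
The plan is to follow the classical approach of Chudnovsky and Seymour and reduce theta detection to the \emph{three-in-a-tree} problem: given a graph $H$ and three vertices in it, decide whether $H$ contains an induced subtree through all three; this is known to be solvable in polynomial time. A theta with apices $a$ and $b$ is, up to the length constraints on its paths, nothing more than three internally vertex-disjoint chordless induced paths between $a$ and $b$ with no additional edges between distinct paths, and such a configuration naturally corresponds to an induced subtree with three leaves in a suitable auxiliary graph.

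The algorithm would proceed by enumerating all pairs of candidate apices $(a,b)$, and, for each pair, guessing a constant amount of further structure around $a$ and $b$ (for instance, the first interior vertex of each of the three sought-for paths). For each such choice of enumerated vertices, one constructs an auxiliary induced subgraph after a careful \emph{cleaning} step, which removes every vertex whose adjacencies with the enumerated vertices would either introduce a forbidden chord or merge two of the three paths. The three-in-a-tree subroutine is then called on the cleaned graph with three designated vertices, and a positive answer is shown to correspond exactly to a theta in the original graph $G$.

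The hard part will be the cleaning and its correctness analysis: one must prove that whenever $G$ contains a theta, for \emph{some} choice of enumerated vertices the cleaned auxiliary graph still retains a witnessing structure, while at the same time ruling out false positives coming from nearby but distinct configurations such as pyramids, prisms, or wheels (which also give rise to induced trees through three designated vertices in a similar auxiliary graph). This is typically handled by a delicate case analysis exploiting the length and non-adjacency constraints that distinguish thetas from the other 3-path-configurations and from wheels. The claimed $\widetilde{O}(n^6)$ bound would then follow by combining the depth of the enumeration (a constant number of guessed vertices, each in $O(n)$) with the fastest known implementation of three-in-a-tree due to Lai, Lu, and Thorup; improving on the exponent would seem to require a substantially new idea, as the enumeration alone already accounts for a significant polynomial factor.
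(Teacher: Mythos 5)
The paper does not actually prove this statement; it is imported verbatim from the literature (Chudnovsky and Seymour for the polynomial-time detection via a reduction to three-in-a-tree, with the $\widetilde{O}(n^6)$ bound coming from the faster three-in-a-tree algorithm of Lai, Lu, and Thorup), and your outline accurately describes the approach taken in those cited works, so in that sense you are following essentially the same route as the paper. Note, however, that what you have written is a plan rather than a proof: the enumeration of apices and nearby vertices, the cleaning step, and the correctness analysis separating thetas from pyramids, prisms, and wheels constitute the entire technical content of the cited papers, and you explicitly defer all of it.
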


We now explain how a long prism can be detected in a graph with no pyramid.
Note that detecting a prism is known to be an \textsf{NP}-complete problem~\cite{MR2191280}.
Our algorithm is inspired from the algorithm in~\cite{MR2191280} to detect a pyramid or a prism.
Adapting it to detect a long prism in a pyramid-free graph is straightforward.
We include all details just for the sake of completeness.
For the reader's convenience, we tried to optimize the length of the proof instead of the running time of the algorithm, since anyway the complexity bottleneck is in the next section.

\begin{lemma}
  \label{l:longP}
  Determining whether a given pyramid-free graph contains a long prism as an induced subgraph can be done in time $\mathcal{O}(n^6(n+m))$.
\end{lemma}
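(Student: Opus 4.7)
My plan is to adapt the ``shortest-path detector'' paradigm introduced by Chudnovsky and Seymour~\cite{MR2127609} and developed in~\cite{MR2191280} for 3-path-configuration detection, using the pyramid-free hypothesis to sidestep the NP-hardness of general prism detection. I would enumerate the skeleton of a potential long prism, namely all ordered tuples $(a_1,a_2,a_3,b_1,b_2,b_3)$ of vertices such that $\{a_1,a_2,a_3\}$ and $\{b_1,b_2,b_3\}$ each induce a triangle in $G$ and the matching $a_i \leftrightarrow b_i$ is prescribed. There are $O(n^6)$ such tuples, and for each one I would attempt to build three pairwise vertex-disjoint induced paths $P_i$ from $a_i$ to $b_i$, with no edges between them other than those of the two triangles, and with at least one path of length at least $2$.

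For a fixed skeleton, and for each $i\in\{1,2,3\}$, I would consider the subgraph $G_i$ obtained from $G$ by deleting all vertices outside $\{a_i,b_i\}$ that have a neighbor in $\{a_1,a_2,a_3,b_1,b_2,b_3\}\setminus\{a_i,b_i\}$, and then compute a shortest induced path $P_i$ from $a_i$ to $b_i$ in $G_i$ via a single BFS in $\mathcal{O}(n+m)$ time. Chordlessness of each $P_i$ is automatic because it is a shortest path; the absence of edges from the interior of $P_i$ to the ``wrong'' triangle vertices is ensured by the construction of $G_i$. It then remains to test directly that the $P_i$ are pairwise vertex-disjoint, that no edge joins the interiors of two distinct $P_i$, and that at least one $P_i$ has length at least $2$; the skeleton yields a long prism if and only if all three conditions hold.

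The correctness statement I would prove is: if $G$ contains any long prism whose triangles and matching are exactly those of the current skeleton, then the three shortest paths $P_i$ computed above also form a long prism. The argument proceeds by contradiction. Suppose that two of the $P_i$, say $P_1$ and $P_2$, share an interior vertex or have a cross-edge between their interiors. Combining the hypothetical actual long prism with the minimality (shortest-path property) of the $P_i$, one locates a vertex on one of the shortest paths that has two neighbors on the other, together with a disjoint ``third leg'' provided by $P_3$ or by a subpath of the hypothetical prism; these pieces assemble into an induced pyramid of $G$, contradicting the pyramid-free hypothesis. A similar dichotomy handles any chord or short-circuit between subpaths.

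The main obstacle is precisely this case analysis: for each possible failure of the shortest-path construction (shared interior vertex, cross-edge between interiors, or unexpected chord), one must exhibit a concrete induced pyramid. This amounts to carefully choosing the apex and the three legs from the available subpaths, using the minimality of the $P_i$ to rule out the shortcuts that would spoil induction. The manipulations are in the spirit of the proof of \cref{l:con3}: one extracts, from a connected piece containing three attachment points, a subgraph belonging to a prescribed family. Once correctness is established, multiplying the $O(n^6)$ skeleton enumerations by the $\mathcal{O}(n+m)$ cost of three BFS calls and the subsequent verifications yields the claimed $\mathcal{O}(n^6(n+m))$ running time.
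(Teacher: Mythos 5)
There is a genuine gap at the heart of your argument: the correctness claim that the three \emph{independently} computed shortest paths $P_1,P_2,P_3$ reassemble into a long prism whenever some long prism realizes the guessed skeleton is exactly the hard part, and your sketch does not establish it. You defer the entire case analysis (shared interior vertices, cross-edges, chords) to a claimed contradiction with pyramid-freeness, but the natural obstruction arising from interference between two legs of a prism is a \emph{theta}, not a pyramid: if a vertex $v$ outside a genuine long prism has a neighbor in the interior of each of two of its paths, then $v$ together with the hole formed by those two paths already yields a theta (two routes around the hole plus the route through $v$), and no pyramid need be present anywhere in the graph. Since the lemma assumes only pyramid-freeness --- theta-freeness is not available at this stage of the paper's pipeline, and the statement you are proving does not assume it --- the intended contradiction simply may not exist, and the shortest paths you compute can legitimately collide or see each other without certifying anything you are allowed to exclude. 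This ``guess the two triangles and route three disjoint paths'' strategy is precisely the naive approach that the NP-completeness of prism detection warns against; the extra hypothesis you have is too weak to repair it in the way you describe.

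The paper's proof avoids this entirely by enumerating a slightly larger seed and replacing the three independent path computations by a single connectivity test. It guesses either a \emph{co-domino} (the two triangles plus the first vertices of the two length-$1$ paths, covering the case where two legs have length $1$) or a \emph{net} (a triangle plus its three pendant neighbors, covering the case where two legs have length at least $2$), deletes the closed neighborhoods of the appropriate seed vertices, and asks whether one connected component of the remainder attaches to all the required pendant vertices. Correctness in the net case then follows from \cref{l:con3}: the component yields an induced subgraph in $\mathcal{S}\cup\mathcal{T}\cup\mathcal{M}$ with the three pendants as extremities, and pyramid-freeness is used only to kill the $\mathcal{S}$ case and the bad $\mathcal{M}$ case, leaving a long prism. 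If you want to salvage your approach, you would need either to add theta-freeness to the hypothesis (which would change the statement and the structure of the overall algorithm) or to carry out and verify the full interference analysis, which is considerably more delicate than the paragraph you devote to it.
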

\begin{proof}
  We call \emph{co-domino} the graph with vertex set $\{a_1, a_2, v_1, v_2, v_3, v_4\}$ and edges $v_1v_2$, $v_2v_3$, $v_3v_4$, $v_4v_1$, $a_1v_1$, $a_1v_2$, $a_2v_3$, and $a_2v_4$.
  We call \emph{net} the graph with vertex set $\{a_1, a_2, a_3, v_1, v_2, v_3\}$ and edges $v_1v_2$, $v_2v_3$, $v_3v_1$, $a_1v_1$, $a_2v_2$, and $a_3v_3$.
  See \cref{co-domino and net figure} for a representation of the co-domino and the net.

  \begin{figure}
    \centering
    \begin{tikzpicture}[scale=1]
      \tikzset{every node/.style={draw,circle,fill=black,inner sep=0pt,minimum size=4.5pt}}
      \begin{scope}[xshift=0cm,yshift=.5cm]
        \node[label=left:{$\strut a_1$}] (a) at (-2,0) {};
        \node[label=above left:{$\strut v_2$}] (b) at (-1,1) {};
        \node[label=below left:{$\strut v_1$}] (c) at (-1,-1) {};
        \node[label=right:{$\strut a_2$}] (a') at (2,0) {};
        \node[label=above right:{$\strut v_3$}] (b') at (1,1) {};
        \node[label=below right:{$\strut v_4$}] (c') at (1,-1) {};
        \draw (a) to (b)
        (a) to (c)
        (b) to (b')
        (b) to (c)
        (c) to (c')
        (b') to (a')
        (b') to (c')
        (c') to (a');
        \node[rectangle,draw=none,fill=none] at (0,-2) {co-domino};
      \end{scope}
      \begin{scope}[xshift=6cm]
        \node[label=right:{$\strut v_1$}] (a) at (90:1) {};
        \node[label=right:{$\strut a_1$}] (a') at (90:2) {};
        \node[label=below right:{$\strut v_3$}] (b) at (210:1) {};
        \node[label=left:{$\strut a_3$}] (b') at (210:2) {};
        \node[label=below left:{$\strut v_2$}] (c) at (-30:1) {};
        \node[label=right:{$\strut a_2$}] (c') at (-30:2) {};

        \draw (a) to (b)
        (b) to (c)
        (c) to (a)
        (a) to (a')
        (b) to (b')
        (c) to (c');
        \node[rectangle,draw=none,fill=none] at (0,-1.5) {net};
      \end{scope}
    \end{tikzpicture}
    \caption{The co-domino and the net.}\label{co-domino and net figure}
  \end{figure}
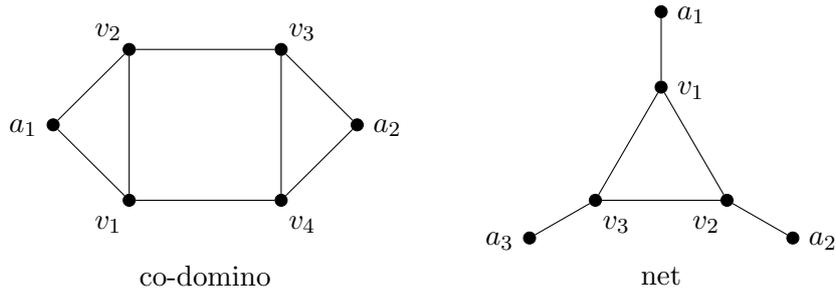

  Consider the following algorithm whose input is a pyramid-free graph $G$.
  \begin{enumerate}
    \item\label[step]{enum co-domino} Enumerate all the induced subgraphs of $G$ isomorphic to a
          co-domino (with notation as in the definition).
          For each of them compute the connected components of $G\setminus (N(v_1) \cup N(v_2) \cup N(v_3) \cup N(v_4))$.
          If one of them contains neighbors of both $a_1$ and $a_2$, output that some long prism exists in $G$ and stop the algorithm.

    \item\label[step]{enum nets} Enumerate all induced subgraphs of $G$ isomorphic to a net (with
          notation as in the definition). For each of them compute the
          connected components of
          $G\setminus (N(v_1) \cup N(v_2) \cup N(v_3))$.  If one of them
          contains neighbors of $a_1$, $a_2$, and $a_3$, output that some long
          prism exists in $G$ and stop the algorithm.

    \item\label[step]{output no long prism} Output that $G$ contains no long prism.
  \end{enumerate}

  The algorithm relies on brute force enumeration of sets
  on six vertices and computation of connected components, so it can
  be implemented to run in time $\mathcal{O}(n^6(n+m))$.
  Also, it clearly outputs that $G$ contains a long prism or that it does not.
  It therefore remains to prove that the algorithm indeed gives the answer that $G$ contains a long prism if and only if it does.

  If $G$ contains a long prism $H$, then assume first that two of the paths of $H$ have length~1.
  Then $H$ contains an induced co-domino that is detected in the \cref{enum co-domino} of the algorithm.
  Because of the third path of the prism, at least one of the computed connected components contains neighbors of both $a_1$ and $a_2$, so the algorithm gives the correct answer.
  Assume now that at least two paths of $H$ have length at least~2.
  Then $H$ contains a net that is detected in \cref{enum nets} of the algorithm (unless the algorithm stopped before, which is fine since $G$ contains a long prism by assumption).
  At least one of the computed connected components (the one that contains the rest of the prism) contains neighbors of $a_1$, $a_2$, and $a_3$, so the algorithm gives the correct answer and stops.

  Conversely, assume that the algorithm gives the answer that $G$ contains a long prism.

  If such an answer is given in \cref{enum co-domino}, this is because of some co-domino and some connected component $X$ that contains neighbors of $a_1$ and $a_2$.
  A shortest path from $a_1$ to $a_2$ with interior in $X$ together with the co-domino form a long prism, so a long prism does exist in the graph.

  If such an answer is given in \cref{enum nets}, it is because of some connected component $X$ of $G\setminus (N(v_1) \cup N(v_2) \cup N(v_3))$ that contains neighbors of $a_1$, $a_2$, and $a_3$.
  In $G' = G[X \cup \{a_1, a_2, a_3\}]$, let us apply \cref{l:con3} to obtain an induced subgraph $H$ that belongs to $\mathcal{S} \cup \mathcal{T} \cup \mathcal{M}$ and has $\{a_1, a_2, a_3\}$ as the set of extremities.
  If $H \in \mathcal{S}$ then $G$ contains a pyramid with triangle $\{v_1,v_2,v_3\}$; this cannot happen since $G$ is pyramid-free.
  If $H \in \mathcal{T}$, then $G$ contains a long prism and the output of the algorithm is correct.
  If $H \in \mathcal{M}$ then $G$ contains a pyramid if the center of $H$ has two nonadjacent neighbors in $H$ or a long prism otherwise.
  The first case cannot happen by assumption and hence the output of the algorithm is correct.
\end{proof}

\subsection{Detecting a broken wheel}
\label{sec:bw}

We give here a polynomial-time algorithm to detect a broken wheel in a graph with no long prism, no pyramid, and no theta.
Recall that detecting a broken wheel in general is an \NP-complete problem.
Note that this article addresses the question of the detection of wheels (broken or not), however, the proof (of~\cite[Theorem 4]{MR3289471}) that detecting a wheel is \textsf{NP}-complete proves without a single modification that detecting a broken wheel is \textsf{NP}-complete.

Let $W = (H, x)$ be a broken wheel.
Since $W$ is broken, it contains at least two sectors of length at least~2, say $P= a\dots b$ and $R = c\dots d$.
We orient $H$ clockwise and assume that $a, b, c$, and $d$ appear clockwise in this order along $H$.
We denote by $Q$ the path of $H$ from $b$ to $c$ that does not contain $a$ and $d$.
We denote by $S$ the path of $H$ from $d$ to $a$ that does not contain $b$ and $c$.
Note that
\begin{equation}\label{eq:wheel}
  \text{possibly $b=c$ or $d=a$, but not both,}
\end{equation}
since $W$ is a wheel by assumption.
Note also that $a\neq b$, $c\neq d$, $ab\notin E(G)$, and $cd\notin E(G)$.

For every vertex $v$ in $H$, we denote by $v^+$ its neighbor in $H$ in the clockwise direction, and by $v^-$ its neighbor in the counter-clockwise direction.
Note that possibly $a^+ = b^-$, $a^+b^-\in E(G)$, $c^+ = d^-$, or $c^+d^-\in E(G)$.

We say that the 13-tuple $F = (x, a, b, c, d, a^+, b^+, c^+, d^+, a^-, b^-, c^-, d^-)$ is a \emph{frame} for $W$.
Every broken wheel has at least one frame.
In what follows, once a frame is fixed, we use the notation for the paths $P$, $Q$, $R$, $S$ without recalling it; see \cref{f.frame}.

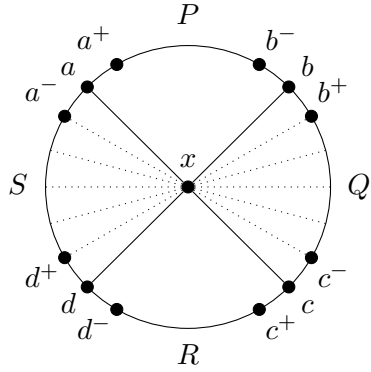
\begin{figure}
  \centering
  \begin{tikzpicture}[scale=0.75]
    \tikzset{every node/.style={draw,circle,fill=black,inner sep=0pt,minimum size=4.5pt}}
    \draw circle (2.5);
    \node[label=above:{$\strut x$}] (x) at (0,0) {};

    \node[label=above left:{$\strut a^+$}] (a+) at (120:2.5) {};
    \node[label=above left:{$\strut a$}] (a) at (135:2.5) {};
    \node[label=above left:{$\strut a^-$}] (a-) at (150:2.5) {};

    \node[label=above right:{$\strut b^+$}] (b+) at (30:2.5) {};
    \node[label=above right:{$\strut b$}] (b) at (45:2.5) {};
    \node[label=above right:{$\strut b^-$}] (b-) at (60:2.5) {};

    \node[label=below right:{$\strut c^-$}] (c-) at (-30:2.5) {};
    \node[label=below right:{$\strut c$}] (c) at (-45:2.5) {};
    \node[label=below right:{$\strut c^+$}] (c+) at (-60:2.5) {};

    \node[label=below left:{$\strut d^-$}] (d-) at (-120:2.5) {};
    \node[label=below left:{$\strut d$}] (d) at (-135:2.5) {};
    \node[label=below left:{$\strut d^+$}] (d+) at (-150:2.5) {};

    \node[draw=none,fill=none,rectangle,label=left:$\strut S$] at (180:2.5) {};
    \node[draw=none,fill=none,rectangle,label=above:$\strut P$] at (90:2.5) {};
    \node[draw=none,fill=none,rectangle,label=right:$\strut Q$] at (0:2.5) {};
    \node[draw=none,fill=none,rectangle,label=below:$\strut R$] at (-90:2.5) {};

    \draw (x) to (a)
    (x) to (b)
    (x) to (c)
    (x) to (d);

    \foreach \t in {150,165,180,195,210,30,15,0,-15,-30}{
        \draw[dotted] (x) to (\t:2.5);
      }
  \end{tikzpicture}
  \caption{A frame for a broken wheel. Dotted edges may be present or not.}\label{f.frame}
\end{figure}

If $W$ is a broken wheel in a graph $G$ with frame $F$, we say that
$(W, F)$ is \emph{optimal} if:
\begin{itemize}
  \item no broken wheel of $G$ has fewer vertices than $W$, and
  \item among all broken wheels with the same number of vertices as $W$
        and their frames, the sum of the lengths of $S$ and $Q$ is minimal.
\end{itemize}

The \emph{cube} is the graph with vertex set $\{v_1, v_2, v_3, v_4, v_5, v_6, x, y\}$ such that $v_1v_2v_3v_4v_5v_6v_1$ is a hole, $N(x) = \{v_1, v_3, v_5\}$, and $N(y) = \{v_2, v_4, v_6\}$; see \cref{fig:cube}.
Observe that removing any vertex from the cube yields a broken wheel on seven vertices.
Hence, every graph that does not contain a broken wheel is cube-free.

\begin{figure}[tb]
  \centering
  \begin{tikzpicture}[scale=2]
    \tikzset{every node/.style={draw,circle,fill=black,inner sep=0pt,minimum size=4.5pt}}
    \foreach \x/\a in {a/90,b/30,c/-30,d/-90,e/210,f/150}{
        \node (\x) at (\a:1) {};
      }
    \draw (a) to (b) to (c) to (d) to (e) to (f) to (a);
    \node (x) at (210:.4) {};
    \draw (x) to[bend left] (b)
    (x) to (d)
    (x) to (f);
    \node (y) at (30:.4) {};
    \draw (y) to (a)
    (y) to (c)
    (y) to[bend left] (e);
  \end{tikzpicture}
  \caption{The cube.}
  \label{fig:cube}
\end{figure}
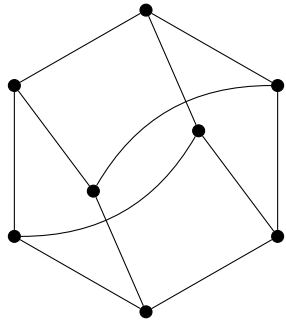

Given a graph $G$, a wheel $W = (H, x)$ contained in $G$ as an induced subgraph, and a vertex $y\in V(G)\setminus V(W)$, we denote by $N_H(y)$ the set of vertices in $H$ that are adjacent to $y$.

We start with three lemmas analyzing the structure around an optimal pair $(W, F)$ of a broken wheel and a frame.

\begin{lemma}\label{l:nw}
  Let $G$ be a (long prism, pyramid, theta, cube)-free graph.
  Let $W = (H, x)$ be a broken wheel in $G$ with frame $F = (x, a, b, c, d, a^+, b^+, c^+, d^+$, $a^-, b^-,\allowbreak c^-, d^-)$ such that $(W, F)$ is optimal.
  Let $y$ be a vertex of $G\setminus V(W)$ that is not adjacent to $a$, $b$, $c$, or $d$.
  Then $N_H(y)$ induces a (possibly empty) path of length at most~2.
  In particular, $N_H(y)$ is contained in the interior of one of $P$, $Q$, $R$, or $S$.
\end{lemma}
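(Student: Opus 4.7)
The plan is to prove the lemma by contradiction, splitting into two claims: first, $N_H(y)$ lies in the interior of a single one of $P$, $Q$, $R$, $S$; second, $N_H(y)$ induces a path of length at most~$2$. Both claims exploit the optimality of $(W, F)$ together with the absence of induced thetas in $G$.

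For the first claim, I assume $N_H(y)$ meets the interiors of two distinct sectors, say $P$ and $Q$ up to symmetry. Picking $u \in N_H(y) \cap \mathrm{int}(P)$ closest to $b$ on $P$ and $v \in N_H(y) \cap \mathrm{int}(Q)$ closest to $b$ on $Q$, I replace the arc $uPbQv$ of $H$ by the length-$2$ path $u$-$y$-$v$ to obtain a new cycle $H'$. If this arc has length at least $3$, then $H'$ is a strictly shorter hole, $(H', x)$ is a broken wheel smaller than $W$ (the sector $R$ is intact and the modified $P$-side has length $\geq 2$), contradicting the minimality of $|V(W)|$. The delicate case is $u = b^-$ and $v = b^+$, where the shortcut preserves $|V(H)|$. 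I split this into three sub-cases. (a)~If $xy \in E(G)$, the three length-$2$ paths $b$-$u$-$y$, $b$-$v$-$y$, $b$-$x$-$y$ form a theta with apices $b$ and $y$. (b)~If $xy \notin E(G)$ and $|R|+|S| \geq 3$, then the cycle consisting of $x$, the subpath $aPu$, then $u$-$y$-$v$, then the subpath $vQc$, and the edge back to $x$ is a hole, and $b$ (adjacent to $u$, $v$ and $x$ on this hole) is the center of a broken wheel with three sectors of length $\geq 2$ and strictly fewer vertices than $W$. (c)~If $xy \notin E(G)$ with $|R| = 2$ and $|S| = 0$ (so $d = a$ and $R$ has a unique interior vertex $r$), then the three paths $u$-$y$-$v$, $u$-$b$-$v$, and $uPa\,r\,c\,v$ (that is, the subpath of $P$ from $u$ to $a$ followed by $a$-$r$-$c$-$v$) form a theta with apices $u$ and $v$. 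Each sub-case contradicts the hypotheses on $G$. Other pairings of sectors (e.g.\ $P$ and $R$) always yield arcs of length $\geq 3$, so the generic shortcut argument suffices.

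For the second claim, by the first I may assume $N_H(y) \subseteq \mathrm{int}(P)$, and label the $y$-neighbors on $P$ as $u_1, \ldots, u_k$ in order from $a$ to $b$. If $|u_1 P u_k| \geq 3$, replacing $u_1 P u_k$ by $u_1$-$y$-$u_k$ yields a strictly shorter hole $H'$ (no chord obstructions arise, since every $y$-neighbor on $H$ lies in $u_1 P u_k$ and is removed), and $(H', x)$ is a broken wheel smaller than $W$. If $|u_1 P u_k| = 2$ and $u_1^+ \notin N_H(y)$, then $k = 2$, and the three paths $u_1$-$y$-$u_k$, $u_1$-$u_1^+$-$u_k$, and the ``long way around'' $H$ from $u_1$ to $u_k$ (of length $|H| - 2 \geq 2$) form a theta with apices $u_1, u_k$. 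In all remaining cases $N_H(y)$ is a set of at most three consecutive vertices on $P$, hence induces a path of length at most~$2$.

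The main obstacle is the boundary case of the first claim, sub-cases (a)-(c), where the shortcut does not strictly shrink $H$. It is handled by switching the center of the candidate smaller wheel from $x$ to $b$, or by exhibiting an explicit theta through the opposite side of $H$. Verifying chordlessness of the relevant cycles requires the extremal choice of $u$ and $v$ and careful use of the fact that $x$ and $b$ are not adjacent to the interior vertices of sectors $P$ and $Q$.
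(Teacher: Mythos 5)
There are genuine gaps in your treatment of the first claim, and they are exactly the places where the paper has to work hardest. First, you never use the secondary optimality condition (that among minimum-size broken wheels the sum of the lengths of $Q$ and $S$ is minimized), yet it is indispensable: when $N_H(y)\subseteq Q\cup S$ with neighbors in the interiors of both, no shortcut produces a \emph{smaller} wheel. Your ``generic shortcut'' removes the arc through $P$ (or through $R$), but the surviving side then has only one guaranteed sector of length at least~$2$, since $x$ may be adjacent to every interior vertex of $Q$ and of $S$ (these are unions of sectors, not single sectors --- only $P$ and $R$ are known to be sectors). The paper instead observes that $y$ must have at least three neighbors in $H$ (else a theta) and re-centers the \emph{same} rim $H$ at $y$, obtaining an equally large wheel with a strictly better frame. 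Second, you never use cube-freeness, which the hypothesis includes for a reason: when $y$ attaches to the interiors of $P$ and $S$ near $a$ (the analogue of your delicate case at the shared endpoint), $y$ can have a third neighbor on the ``long way around,'' which kills your sub-case~(c) theta (its third path is not anticomplete to $y$); the paper's analysis of this configuration bottoms out in an induced cube, not a theta.

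There are also several local errors. Choosing $u$ and $v$ \emph{closest to $b$} does not make the shortcut cycle chordless: $y$ may have further neighbors on the arc you keep (in $aPu$, $vQc$, or in the other two sectors --- you tacitly assume $N_H(y)$ meets only the two chosen interiors). Your sub-case~(a) theta $b$-$u$-$y$ / $b$-$v$-$y$ / $b$-$x$-$y$ and your sub-case~(b) hole both break when $x$ is adjacent to $v=b^+$ or to other interior vertices of $Q$, which the frame does not forbid. And the assertion that ``other pairings always yield arcs of length $\geq 3$'' is false: if $b=c$, a neighbor $b^-\in\mathrm{int}(P)$ and a neighbor $b^+\in\mathrm{int}(R)$ are joined by an arc of length~$2$ through $b$. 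The paper's proof is organized around a different and essentially unavoidable case split (at most one arc met; $N_H(y)\subseteq Q\cup S$; neighbors in $\mathrm{int}(P)$ and in $S$; neighbors in $\mathrm{int}(P)$ and $\mathrm{int}(R)$ only), each resolved by a tailored argument. You would need to rebuild the argument along those lines rather than patch the shortcut.
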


\begin{proof}
  Assume first that $y$ has neighbors in at most one path among $P$, $Q$, $R$, and $S$.
  If $N_H(y)$ is not contained in some path of $H$ of length at most~2, it can be used to replace some subpath of $H$ to obtain a broken wheel with fewer vertices than $W$, a contradiction (note that this is correct if $xy\in E(G)$ and if $xy\notin E(G)$).
  Hence, $N_H(y)$ is contained in some path $T$ of $H$ of length at most~2, and it must be equal to $T$, because otherwise $T$ has length~2 and $y$ is nonadjacent to its middle vertex, and $y$ and $H$ form a theta.
  So, the conclusion of the lemma holds.
  Hence, from now on, we assume that $y$ has neighbors in at least two paths among $P$, $Q$, $R$, and $S$.

  If $N_H(y) \subseteq Q\cup S$, then $y$ has neighbors in both $Q$ and $S$.
  Since $G$ is theta-free, $y$ has at least three neighbors
  in $H$.
  It follows that $y$ is the center of a broken wheel with
  rim $H$, with the same number of vertices as $W$ and that has a
  frame contradicting the minimality of the sum of the lengths of $Q$
  and $S$ (since $y$ is not adjacent to the endpoints of $Q$ and $S$).
  Hence, $y$ has neighbors in the interior of at least one of $P$ or $R$, say in $P$ up to symmetry.

  If $y$ is adjacent to $x$, then $P$, $x$, and $y$ induce a theta or a broken wheel with center $y$ and with fewer vertices than $W$, a contradiction.
  So, $y$ is not adjacent to $x$.

  Assume that $y$ has neighbors in $S$.
  Let $a'$ be the neighbor of $y$ in $P$ closest to $a$ along $P$.
  Let $b'$ be the neighbor of $y$ in $P$ closest to $b$ along $P$.
  Let $d'$ be the neighbor of $y$ in $S$ closest to $d$ along $S$.
  Let $T$ be a shortest path from $x$ to $d'$ in the subgraph of $G$ induced by the vertices of the path $xdSd'$.
  If $d'a\notin E(G)$, the paths $xaPa'$, $xbPb'$, and $xTd'y$ induce a theta or a pyramid (namely, if $a'=b'$ they form a theta with apices $a'$ and $x$, if $a'b'\in E(G)$ they form a pyramid, and otherwise they form a theta with apices $x$ and $y$).
  Hence, $d'a\in E(G)$.
  It follows that $d'=a^-$ is the unique neighbor of $y$ in $S$.

  We claim that $a'=b'$ and $a'a\in E(G)$.
  Indeed, if $a'=b'$ and $a'a\not\in E(G)$, then the three paths $aPa'$, $axbPa'$, and $ad'ya'$ form a theta with apices $a$ and $a'$; if $a'$ and $b'$ are adjacent, then the three paths $aPa'$, $axbPb'$, and $ad'y$ form a pyramid with apex $a$ and triangle $\{a',b',y\}$; and if $a'$ and $b'$ are distinct and nonadjacent, then the three paths $aPa'y$, $axbPb'y$, and $ad'y$ form a theta with apices $a$ and $y$.
  It follows that $a'=a^+$ is the unique neighbor of $y$ in $P$.
  Therefore, the hole $xTa^-ya^+Pbx$ is the rim of a broken wheel $W'$
  centered at $a$.
  To avoid that the wheel $W'$ has fewer vertices than $W$, we infer that $b=c$, $R$ has length exactly~2, and $T = xdSa^-$ (which implies $xa^-\notin E(G)$).
  So, $Q= b$ and $R=bb^+d$.
  Vertex $y$ must be adjacent to $b^+$, since otherwise $H$ and $y$ form a theta with apices $a^-$ and $a^+$ (recall that $y$ is not adjacent to $a,b,c$ and $d$).
  We have $a^-d\in E(G)$, for otherwise the three paths $dSa^-$, $dxaa^-$, and $db^+ya^-$ form a theta with apices $d$ and $a^-$.
  By a symmetric argument, we must have $a^+b\in E(G)$.
  Hence, $\{x, y, a, b, d, a^-, a^+, b^+\}$ induces a cube, a contradiction.

  We proved that $y$ cannot have neighbors in $S$. Symmetrically, it
  cannot have neighbors in $Q$. Hence, $y$ has neighbors in both $P$
  and $R$ and no neighbor in $Q$ and $S$.  Since $G$ contains no
  theta, $y$ must have at least two neighbors in either $P$ or $R$,
  say in $R$ up to symmetry.
  Up to symmetry, we may assume by \eqref{eq:wheel} that
  $a\neq d$ (so possibly $b=c$).
  Let $a'$ be the neighbor of $y$ in $P$ closest to $a$ along $P$.
  Let $d'$ be the neighbor of $y$ in $R$ closest to $d$ along $R$.
  Let $c'$ be the neighbor of $y$ in $R$ closest to $c$ along $R$.
  Note that $c'\neq d'$.  If
  $ca'\notin E(G)$, then the three paths $yd'Rdx$, $yc'Rcx$, and $ya'Pax$ induce a long prism, a pyramid, or a theta (namely, they induce a long prism if $c'd'\in E(G)$ and $ad\in E(G)$, a pyramid if exactly one of $c'd'\in E(G)$ and $ad\in E(G)$ happens, and a theta otherwise).  So, $ca'\in E(G)$.
  Hence, $b=c$ and $a'$ is the unique neighbor of $y$ in $P$.
  Hence, since $c'\neq d'$, the three paths $yd'Rdxb$, $yc'Rb$, and $ya'b$ induce a pyramid with triangle $\{y,c',d'\}$ or a theta with apices $y$ and $b$, a contradiction.
\end{proof}

\begin{lemma}\label{l:rP}
  Let $G$ be a (long prism, pyramid, theta, cube)-free graph.
  Let $W = (H, x)$ be a broken wheel of $G$ with frame $F = (x, a, b, c, d, a^+, b^+, c^+, d^+$, $a^-, b^-,\allowbreak c^-, d^-)$ such that $(W, F)$ is optimal.
  Let $P' = a^+ \dots b^-$ be a shortest path from $a^+$ to $b^-$ in  $G\setminus( (N[a]\cup N[b] \cup N[c] \cup N[d] \cup N[x])
    \setminus\{a^+, b^-\})$.
  Then the induced subgraph of $G$ obtained from $W$ by removing the internal vertices of $P$ and adding the vertices of $P'$ instead is a broken wheel with frame $F$, and $(W', F)$ is optimal.
  A similar result holds for $R$.
\end{lemma}
\begin{proof}
  If no vertex of $P'$ has a neighbor in the interior of $Q$, $R$, or $S$ (in which case $P'$ is vertex-disjoint from $Q\cup R \cup S$), then the conclusion of the lemma holds.
  Hence, suppose for a contradiction that some vertex $v$ of $P'$ has a neighbor in the interior of $Q$, $R$, or $S$, and choose $v$ closest to $a^+$ along $P'$.
  Note that $v$ is in the interior of $P'$, hence, $v\not\in W$ and $v$ is not adjacent to any of $a$, $b$, $c$, $d$, and $x$.
  Let $w$ be the vertex closest to $v$ along $vP'a^+$ that has some neighbor in $P$.
  Note that $w$ exists because $a^+\in P$; moreover, $w$ is in the interior of $P'$; hence, $w\not\in W$ and $w$ is not adjacent to any of $a$, $b$, $c$, $d$, and $x$.
  Since $v$ has a neighbor in the interior of $Q$, $R$, or $S$, \cref{l:nw} implies that $N_H(v)$ induces a nonempty path of length at most~2 that is contained in the interior of either $Q$, $R$, or $S$.
  Similarly, $N_H(w)$ induces a nonempty path of length at most~2 that is contained in the interior of $P$.
  Note that $w\neq v$.
  We set $T= vP'w$.
  Now, we focus on $T$ and $H$.
  By construction, $T$ is disjoint from $H$ and no vertex in the interior of $T$ has a neighbor in $H$.
  If $vw \notin E(G)$ or if one of $\vert N_H(v) \vert$ or $\vert N_H(w) \vert$ has size at most $2$, then $T\cup H$ induces a long prism, a pyramid, or a theta.
  Hence, $vw\in E(G)$, and both $N_H(v)$ and $N_H(w)$ induce a path of length~2.
  It follows that the subgraph of $G$ induced by $H\cup T \cup  \{x\}$ contains a theta with apices $v$ and $x$, a contradiction.
\end{proof}

The next lemma might look identical to the previous one, but there is
an important difference: the vertices of the shortest path are allowed
to be adjacent to $x$.

\begin{lemma}
  \label{l:rQ}
  Let $G$ be a (long prism, pyramid, theta, cube)-free graph.  Let
  $W = (H, x)$ be a broken wheel of $G$ with frame
  $F = (x, a, b, c, d, a^+, b^+, c^+, d^+, a^-,$ $b^-, c^-, d^-)$ such
  that $(W, F)$ is optimal.
  Let $Q' = b^+ \dots c^-$ be a shortest path from $b^+$ to $c^-$ in
  $G\setminus( (N[a]\cup N[b] \cup N[c] \cup N[d])
    \setminus\{b^+, c^-\})$.
  Then the induced subgraph of $G$ obtained from $W$ by removing the internal vertices of $Q$ and adding the vertices of $Q'$ instead is a broken wheel with frame $F$, and  $(W', F)$ is optimal.
  A similar result holds for $S$.
\end{lemma}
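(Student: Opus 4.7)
My plan is to follow the proof of \cref{l:rP}, adjusting only for the fact that interior vertices of $Q'$ may now be adjacent to $x$. As in that proof, the conclusion is immediate if no vertex of $Q'$ has a neighbor in the interior of $P$, $R$, or $S$. Otherwise, I pick $v$ on $Q'$ closest to $b^+$ with such a neighbor, and $w$ on $vQ'b^+$ closest to $v$ with a neighbor in $Q$; both lie in the interior of $Q'$, hence are nonadjacent to $a, b, c, d$, though unlike in \cref{l:rP} they may be adjacent to $x$. \Cref{l:nw} then gives that $N_H(v)$ is a path of length at most $2$ inside some sector $U \in \{P, R, S\}$ and $N_H(w)$ is such a path inside $Q$; setting $T = vQ'w$, the path $T$ is disjoint from $H$ and its internal vertices have no neighbors in $H$.

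If $vw \notin E(G)$ or one of $|N_H(v)|, |N_H(w)|$ is at most $2$, the argument of \cref{l:rP} applies verbatim: $T \cup H$ induces a long prism, a pyramid, or a theta, contradicting the hypothesis on $G$. This step does not involve $x$, so it transfers without change. So assume $vw \in E(G)$ and $|N_H(v)| = |N_H(w)| = 3$. If neither $v$ nor $w$ is adjacent to $x$, the theta with apices $v$ and $x$ constructed in the proof of \cref{l:rP} still exists inside $G[H \cup T \cup \{x\}]$, yielding the desired contradiction.

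The genuinely new case is when $v$ (or, symmetrically, $w$) is adjacent to $x$, which invalidates the theta-with-apices-$v, x$ argument. Here I will produce a broken wheel strictly smaller than $W$, contradicting the optimality of $(W, F)$. The idea is to use the edge $vw$ to shortcut around $H$: depending on which sector $U$ contains $N_H(v)$, I remove the portion of $H$ running from one extreme $H$-neighbor of $v$ along $U$ to $b$ or $c$, then along $Q$ to one extreme $H$-neighbor of $w$ (and, when $U = S$, additionally through the whole sector $R$), and replace that portion by the two edges joining $v$ and $w$ to the surviving extreme neighbors together with the edge $vw$. A direct count shows that the resulting cycle $H^\diamond$ satisfies $|V(H^\diamond)| \le |V(H)| - 3$, and it is chordless because the $H$-neighbors of $v$ and $w$ lying outside $H^\diamond$ are discarded while those kept are consecutive to $v$ and $w$ on $H^\diamond$. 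The pair $(H^\diamond, x)$ is a wheel because at least three of $a, b, c, d, v, w$ remain on $H^\diamond$ as $x$-neighbors (the assumption $vx \in E(G)$ or $wx \in E(G)$ is essential in the case $U = S$, where both $c$ and $d$ are removed), and it is broken because at least one of the long sectors $P$ and $R$ of $W$ is preserved in full with length $\geq 2$ in $H^\diamond$. The main effort in writing this up will be the uniform verification of chordlessness and of the broken-wheel property across the three subcases $U \in \{P, R, S\}$.
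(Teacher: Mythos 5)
Your setup follows the paper's proof exactly up to the point where you have reduced to $vw\in E(G)$ and $|N_H(v)|=|N_H(w)|=3$, and your handling of the case where neither $v$ nor $w$ is adjacent to $x$ (the theta with apices $v$ and $x$) matches the paper, which in fact shows more: a theta exists as soon as \emph{one} of $xv,xw$ is a non-edge, so the paper may assume $xv,xw\in E(G)$ before doing any case analysis. Your subcases $U=P$ and $U=R$ also agree in substance with the paper (a smaller broken wheel centered at $x$ through $a,v,w,c,d$), and there the second long sector is genuinely guaranteed, because the kept piece $aPp_1$ lies inside the \emph{sector} $P$, whose interior contains no neighbors of $x$.

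The genuine gap is in the subcase $N_H(v)\subseteq S$. Your justification for brokenness --- ``at least one of the long sectors $P$ and $R$ of $W$ is preserved in full with length $\ge 2$'' --- establishes only \emph{one} sector of length at least $2$, whereas a broken wheel needs two. And here the second long sector can actually fail to exist: unlike $P$ and $R$, the paths $Q$ and $S$ of the frame are merely arcs of $H$, not sectors, so their interior vertices are allowed to be adjacent to $x$ (these are the dotted edges in \cref{f.frame}). If both $v$ and $w$ are adjacent to $x$ and every vertex of the kept pieces $bQq_b$ and $s_aSa$ is also adjacent to $x$, then every sector of $(H^\diamond,x)$ other than $aPb$ has length $1$, and your smaller wheel is a wheel but not a broken one, so it contradicts nothing. (When exactly one of $v,w$ is adjacent to $x$ your construction does survive, since the non-neighbor of $x$ among $v,w$ sits in the interior of a sector of length at least $2$; the problem is only the ``both adjacent'' configuration.) This is precisely why the paper treats the $S$-case differently: having first forced $xv,xw\in E(G)$, it replaces the middle vertex of $N_Q(w)$ by $w$ in $H$ to get a hole of the \emph{same} size that is the rim of a broken wheel centered at $v$, whose frame has the two short paths of total length $2$, contradicting the \emph{second} optimality condition (minimality of the sum of the lengths of $Q$ and $S$). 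Your argument never uses that second condition, and some use of it (or another idea) is needed to close this subcase.
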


\begin{proof}
  If no vertex of $Q'$ has a neighbor in the interior of $P$, $R$, or $S$ (in which case $Q'$ is vertex-disjoint from $P\cup R \cup S$), then the conclusion of the lemma holds.
  Hence, suppose for a contradiction that some vertex $v$ of $Q'$ has some neighbor in the interior of $P$, $R$, or $S$, and choose $v$ closest to $b^+$ along $Q'$.
  Let $w$ be the vertex closest to $v$ along $vQ'b^+$ that has some neighbor in $Q$.

  The rest of the proof is similar to the proof of \cref{l:rP}, except that $x$ might be adjacent to $v$ or $w$.
  For clarity, we provide in this paragraph the details of the proof that are similar to those in the proof of \cref{l:rP}.
  Note that $v$ is in the interior of $Q'$, hence, $v\not\in W$ and $v$ is not adjacent to any of $a$, $b$, $c$, and $d$.
  Similarly, $w$ exists because $b^+\in Q$; moreover, $w$ is in the interior of $Q'$; hence, $w\not\in W$ and $w$ is not adjacent to any of $a$, $b$, $c$, and $d$.
  Since $v$ has a neighbor in the interior of $P$, $R$, or $S$, \cref{l:nw} implies that $N_H(v)$ induces a nonempty path of length at most~2 that is contained in the interior of either $P$, $R$, or $S$.
  Similarly, $N_H(w)$ induces a nonempty path of length at most~2 that is contained in the interior of $Q$.
  Since $v$ has no neighbors in $Q$, it is distinct from $w$ and nonadjacent to both $b^+$ and $c^-$.
  We set $T= vQ'w$.
  Now we focus on $T$ and $H$.
  By construction, $T$ is disjoint from $H$ and no vertex in the interior of $T$ has a neighbor in $H$.
  If $vw \notin E(G)$ or if one of $\vert N_H(v) \vert$ or $\vert N_H(w) \vert$ has size at most $2$, then $T\cup H$ induces a long prism, a pyramid, or a theta.
  Hence, $vw\in E(G)$, and both $N_H(v)$ and $N_H(w)$ induce a path of length~2.
  If $xv\notin E(G)$, then the subgraph of $G$ induced by $H\cup T \cup  \{x\}$ contains a theta with apices $x$ and $v$.
  If $xw\notin E(G)$, then the subgraph of $G$ induced by $H\cup T \cup \{x\}$ contains a theta with apices $x$ and $w$.
  Hence, $xv\in E(G)$ and $xw\in E(G)$.

  Recall that $N_H(v)$ induces a path of length~2 that is contained in the interior of exactly one of $P$, $R$, or $S$.
  We now analyze these three cases, starting with $S$.
  If $N_H(v)\subseteq S$, then let $w'$ be the middle vertex of $N_Q(w)$.
  The hole $H'$ obtained from $H$ by replacing $w'$ by $w$ is the rim
  of a broken wheel $W'$ centered at $v$.
  This wheel has a frame that contradicts the optimality of $(W, F)$.

  If $N_H(v)\subseteq P$, then some hole going through $a$, $v$, $w$,
  $c$, and $d$ is the rim of a broken wheel centered at $x$ that has
  fewer vertices than $W$, a contradiction to the optimality of
  $(W, F)$.
  The case if $N_H(v)\subseteq R$ is similar.
\end{proof}

Now we have everything ready to prove the main result of this section.

\begin{lemma}
  \label{l:bW}
  Determining whether a given (long prism, pyramid, theta)-free graph contains a broken wheel as an induced subgraph can be done in time   $\mathcal{O}(n^{13}(n+m))$.
\end{lemma}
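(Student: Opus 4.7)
My plan is to use a shortest-path-detector strategy in the style of Chudnovsky--Seymour: since we have already identified (via the $13$-tuple frame and Lemmas~\ref{l:rP} and~\ref{l:rQ}) an auxiliary graph in which each sector of an optimal broken wheel becomes a shortest path between prescribed vertices, the algorithm only needs to enumerate the possible frames and then run BFS.

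Concretely, the algorithm iterates over all $\mathcal{O}(n^{13})$ ordered $13$-tuples $F = (x, a, b, c, d, a^+, b^+, c^+, d^+, a^-, b^-, c^-, d^-)$ of vertices of $G$. For each such $F$, it first checks in constant time the structural constraints required for $F$ to be the frame of some broken wheel (the edges $xa$, $xb$, $xc$, $xd$, $aa^+$, $aa^-$, $bb^+$, $bb^-$, $cc^+$, $cc^-$, $dd^+$, $dd^-$ are present; the non-edges $ab$, $cd$ are absent; only the coincidences permitted by~\eqref{eq:wheel} occur; etc.). It then runs four BFSs to obtain a shortest path $P'$ from $a^+$ to $b^-$ and a shortest path $R'$ from $c^+$ to $d^-$ in the subgraphs dictated by \cref{l:rP}, and a shortest path $Q'$ from $b^+$ to $c^-$ and a shortest path $S'$ from $d^+$ to $a^-$ in the subgraphs dictated by \cref{l:rQ}. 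Finally, it verifies that $\{x\} \cup V(P') \cup V(Q') \cup V(R') \cup V(S')$ induces a broken wheel with frame $F$; if so, it answers ``yes''. If no frame yields such a structure, it answers ``no''.

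Correctness in one direction is immediate, because the algorithm answers ``yes'' only after exhibiting a broken wheel. For the other direction, I would fix an optimal pair $(W, F)$ whenever $G$ contains a broken wheel, and examine the iteration in which the algorithm inspects this particular $F$. Applying \cref{l:rP} replaces the interior of $P$ by any shortest $P'$ and yields another optimal broken wheel with frame $F$; applying the symmetric version of \cref{l:rP} for $R$, and then \cref{l:rQ} twice (for $Q$ and for $S$), replaces each remaining sector by a shortest path in the relevant auxiliary graph. Crucially, since the four auxiliary graphs depend only on $F$ and not on the current wheel, the four BFS outputs coincide with the sectors obtained by this iterated replacement, so the algorithm's verification step succeeds.

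For the running time, there are $\mathcal{O}(n^{13})$ frames, each processed by four BFSs plus a constant number of adjacency scans, giving the announced bound of $\mathcal{O}(n^{13}(n+m))$. I expect the main obstacle to be bookkeeping rather than substance: one has to enumerate frames flexibly enough to accommodate the degenerate cases from~\eqref{eq:wheel} (for instance $b=c$, where $Q$ collapses to a single vertex and there is no BFS to run for $Q'$), to check that the object produced by the algorithm satisfies the full definition of a broken wheel (the rim must be chordless, $x$ must have at least three neighbors on it, and at least two sectors must have length at least~$2$), and to confirm that Lemmas~\ref{l:rP} and~\ref{l:rQ} really can be applied sequentially---which works here because each replacement preserves both the frame and the optimality assumption needed by the next.
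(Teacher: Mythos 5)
Your algorithm and correctness argument follow the paper's proof almost exactly (enumerate frames, run four BFSs, apply \cref{l:rP} and \cref{l:rQ} sequentially to an optimal pair $(W,F)$, verify the result), but there is one genuine gap: \cref{l:rP} and \cref{l:rQ} (and \cref{l:nw}, on which they rely) are stated for \emph{(long prism, pyramid, theta, cube)-free} graphs, whereas the lemma you are proving only assumes the input is (long prism, pyramid, theta)-free. Nothing in your proposal establishes cube-freeness before you invoke these lemmas, so the iterated-replacement argument in your ``other direction'' is not licensed as written. This is not a cosmetic issue: the cube hypothesis is used in the proof of \cref{l:nw} to derive the final contradiction, so if $G$ happens to contain a cube, the guarantee that the BFS outputs reassemble into a broken wheel with frame $F$ simply does not follow from the cited lemmas.

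The paper closes this gap with a preprocessing step that your algorithm omits: before the $13$-tuple enumeration, it enumerates all $7$-tuples of vertices and answers ``yes'' immediately if any of them induces a broken wheel. Since deleting any vertex of the cube yields a broken wheel on seven vertices, passing this test certifies that $G$ is cube-free, and only then are \cref{l:rP} and \cref{l:rQ} applicable in the main loop. The extra $\mathcal{O}(n^{7})$ work (times a constant-size check per tuple) is absorbed by the $\mathcal{O}(n^{13}(n+m))$ bound, so adding this step to your algorithm repairs the proof without changing anything else; the remainder of your argument, including the handling of the degenerate cases from~\eqref{eq:wheel} and the running-time analysis, matches the paper's.
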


\begin{proof}
  Consider the following algorithm whose input is any (long prism, pyramid, theta)-free graph~$G$.
  \begin{enumerate}
    \item\label[step]{enumerate 7-tuples} Enumerate all 7-tuples of vertices of $G$, and if one of them induces a broken wheel, output ``$G$ contains a broken wheel'' and stop.
    \item\label[step]{enumerate 13-tuples} Enumerate all 13-tuples
          $F= (x$, $a,$ $b,$ $c,$ $d,$ $a^+,$ $b^+,$ $c^+,$ $d^+,$ $a^-,$ $b^-,$ $c^-,$ $d^-)$ of vertices of $G$.
          For each of them:

          \begin{enumerate}[label=(\roman*)]
            \item\label[substep]{a+ to b-} Compute a shortest path $P'$ from $a^+$ to $b^-$ in
                  $G\setminus( (N[a]\cup N[b] \cup N[c] \cup N[d] \cup N[x])
                    \setminus\{a^+, b^-\})$.
                  If no such path exists, set $P' = \emptyset$.
            \item\label[substep]{b+ to c-} If $b = c$, set $Q' = \emptyset$. Otherwise, compute a shortest path $Q'$ from $b^+$ to $c^-$ in $G\setminus( (N[a]\cup N[b] \cup N[c] \cup N[d])
                    \setminus\{b^+, c^-\})$. If no such path exists, set $Q' = \emptyset$.
            \item\label[substep]{c+ to d-} Compute a shortest path $R'$ from $c^+$ to $d^-$ in $G\setminus( (N[a]\cup N[b] \cup N[c] \cup N[d] \cup N[x])
                    \setminus\{c^+, d^-\})$.
                  If no such path exists, set $R' = \emptyset$.
            \item\label[substep]{d+ to a-} If $d = a$, set $S' = \emptyset$. Otherwise, compute a shortest path $S'$ from $d^+$ to $a^-$ in $G\setminus( (N[a]\cup N[b] \cup N[c] \cup N[d])
                    \setminus\{d^+, a^-\})$.
                  If no such path exists, set $S' = \emptyset$.
            \item\label[substep]{output contains broken wheel} If $P'\cup Q' \cup R'\cup S'\cup F$ induces a broken wheel centered at $x$, output
                  ``$G$ contains a broken wheel'' and stop.
          \end{enumerate}
    \item Output ``$G$ contains no broken wheel''.
  \end{enumerate}

  If the algorithm outputs that $G$ contains a broken wheel in \cref{enumerate 13-tuples}~\cref{output contains broken wheel}, it obviously does.
  Conversely, assume that $G$ contains a broken wheel, and let us check that  the algorithm gives the correct answer.

  If $G$ contains a broken wheel on~7 vertices, then it is detected in \cref{enumerate 7-tuples}.
  So we may assume that $G$ contains no broken wheel on~7 vertices, which implies that $G$ contains no cube and that the algorithm continues with \cref{enumerate 13-tuples}.

  Since $G$ contains a broken wheel, it contains a broken wheel $W$ with a frame $F$ such that $(W, F)$ is optimal.
  We denote by $P$, $Q$, $R$, and $S$ the paths of the broken wheel as above.
  At some point in \cref{enumerate 13-tuples}, the algorithm considers $F$.
  By \cref{l:rP} applied to $W$, the paths $P'$ (defined in \cref{enumerate 13-tuples}~\cref{a+ to b-}), $Q$, $R$, and $S$ together with $F$ form a broken wheel $W_1$.
  By \cref{l:rQ} applied to $W_1$ if $b\neq c$ (and trivially otherwise), the paths $P'$, $Q'$ (defined in \cref{enumerate 13-tuples}~\cref{b+ to c-}), $R$, and $S$ together with $F$ form a broken wheel $W_2$.
  By \cref{l:rP} applied a second time to $W_2$, the paths $P'$, $Q'$, $R'$ (defined in \cref{enumerate 13-tuples}~\cref{c+ to d-}), and $S$ together with $F$ form a broken wheel $W_3$.
  By \cref{l:rQ} applied a second time to $W_3$ if $d\neq a$ (and trivially otherwise), the paths $P'$, $Q'$, $R'$, and $S'$ (defined in \cref{enumerate 13-tuples}~\cref{d+ to a-}) together with $F$ form a broken wheel $W_4$.
  Hence, the algorithm actually finds a broken wheel, and therefore gives the correct answer.

  The algorithm relies on a brute force enumeration of 13-tuples of vertices that can be implemented in time $\mathcal{O}(n^{13})$, and a search of the graph for computing shortest paths $P'$, $Q'$, $R'$, $S'$, as well as testing if $P'$, $Q'$, $R'$, $S'$, and $F$ induce a broken wheel centered at $x$ can be implemented in time $\mathcal{O}(n+m)$.
\end{proof}

\subsection{Deciding if $K_{2,3}$ is an induced minor}
\label{sec:o}

We can now state the main result of this section.

\begin{theorem}
  Determining whether a given graph contains $K_{2, 3}$ as an induced minor can be done in time $\mathcal{O}(n^{13}(n+m))$.
\end{theorem}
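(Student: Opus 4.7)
The plan is to combine the structural characterization of \cref{t:k23T} with the four algorithmic results developed in the preceding sections, running the corresponding detection subroutines in an order that respects the hypotheses each of them requires. By \cref{t:k23T}, a graph $G$ contains $K_{2,3}$ as an induced minor if and only if $G$ contains at least one of a long prism, a pyramid, a theta, or a broken wheel as an induced subgraph, so it suffices to decide, in sequence, whether any of these four configurations is present.

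Concretely, I would first invoke \cref{th:pyramid} to detect a pyramid in time $\widetilde{O}(n^5)$; if one is found, output ``yes'' and stop. Otherwise, apply \cref{th:theta} to detect a theta in time $\widetilde{O}(n^6)$; if one is found, output ``yes'' and stop. At this point $G$ is known to be pyramid-free, which is exactly the hypothesis required by \cref{l:longP}, so I would then run the long prism detection in time $\mathcal{O}(n^6(n+m))$. If a long prism is found, output ``yes'' and stop; otherwise, $G$ is now (long prism, pyramid, theta)-free, which matches the hypothesis of \cref{l:bW}, and I would run the broken wheel detection algorithm in time $\mathcal{O}(n^{13}(n+m))$. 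If none of the four subroutines reports a configuration, the algorithm outputs ``no''.

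Correctness is immediate from \cref{t:k23T}: if any one of the four configurations is present in $G$, the corresponding subroutine detects it (the later subroutines are only reached when the earlier configurations are absent, so the preconditions of \cref{l:longP} and \cref{l:bW} are satisfied when they are invoked); conversely, if all four subroutines report ``no'', then $G$ contains none of the four configurations as an induced subgraph and therefore, by \cref{t:k23T}, no induced minor model of $K_{2,3}$ exists in $G$. The running time is the sum of the four bounds, which is dominated by the broken wheel detection step and thus $\mathcal{O}(n^{13}(n+m))$.

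There is essentially no new difficulty to overcome here, since the substantial work has already been done in \cref{sec:r,sec:g,sec:bw}. The only point that requires mild care is the bookkeeping to ensure that each subroutine is only called when its input hypotheses are satisfied; once the sequential ordering above is fixed, this is automatic. Consequently, the proof reduces to a brief verification that the four detection steps compose into a single algorithm with the claimed complexity.
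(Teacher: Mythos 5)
Your proposal is correct and follows essentially the same approach as the paper: apply \cref{t:k23T} to reduce to detecting the four Truemper configurations, then run the pyramid, theta, long prism, and broken wheel subroutines in that order so each lemma's hypotheses are satisfied, with the broken wheel step dominating the running time. No differences worth noting.
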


\begin{proof}
  By \cref{t:k23T}, it is enough to decide whether $G$ contains a long prism, a pyramid, a theta, or a broken wheel as an induced subgraph.
  Detecting a pyramid can be performed in time $\widetilde{O}(n^5)$ by \cref{th:pyramid}.
  Detecting a theta can be performed in time $\widetilde{O}(n^6)$ by \cref{th:theta}.
  Hence, it can be assumed that the input graph contains no pyramid and no theta, so that detecting a long prism can be performed in time $\mathcal{O}(n^6(n+m))$ by \cref{l:longP}.
  Hence, it can be assumed that the input graph contains no pyramid, no theta, and no long prism, so that detecting a broken wheel can be performed in time $\mathcal{O}(n^{13}(n+m))$ by \cref{l:bW}.
\end{proof}

All the algorithms in this section are written for the decision version of the problem they address, but an algorithm that actually outputs the structure whose existence is guaranteed can be obtained using \cref{obs sub algo,lem algo}.

The above polynomial-time algorithm suggests that a full structural description of graphs that do not contain $K_{2, 3}$ as an induced minor may be worth investigating.
In particular, we propose the following.

\begin{conjecture}\label{conj}
  There exists a polynomial $p$ and an integer $k$ such that if $G$ has no clique cutset and does not contain $K_{2,3}$ as an induced minor, then either $G$ has at most $p(|V(G)|)$ minimal separators or $G$ has no hole of length at least $k$.
\end{conjecture}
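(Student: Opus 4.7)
My plan for \cref{conj} is to argue by contrapositive: assume $G$ has no clique cutset, does not contain $K_{2,3}$ as an induced minor, and contains a hole $H$ of length at least some large constant $k$ to be determined. The goal is then to show that $G$ has polynomially many minimal separators.

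The first step is to analyze the possible attachments of a vertex $u \in V(G)\setminus V(H)$ to $H$. By \cref{t:k23T}, $G$ contains no long prism, pyramid, theta, or broken wheel as induced subgraph. Broken-wheel-freeness forces $N_H(u)$ to be very restricted when $|V(H)|\geq k$: if $|N_H(u)|\geq 3$, then the wheel $(H,u)$ must have at most one sector of length at least~$2$, so $N_H(u)$ essentially consists of a consecutive arc of $H$ (possibly with one ``jump''). Similarly, for any connected subgraph $C$ of $G\setminus V(H)$, the set $N_H(C)$ is constrained by theta- and prism-freeness: two vertex-disjoint induced paths with interior in $C$ and endpoints far apart on $H$ cannot coexist. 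Together with \cref{l:con3}, which severely restricts how three independent vertices of a potential minimal separator can attach through a common component, this should yield a ``local'' attachment picture in which each component of $G\setminus V(H)$ is anchored to a short arc of $H$.

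The second step is to convert this local picture into a global count. The aim is to show that in the absence of clique cutsets, every minimal separator of $G$ is essentially determined by a bounded number of arcs of $H$ together with a bounded amount of material attached to those arcs. Since there are only $O(n^2)$ arcs of a cycle on $n$ vertices, this would give a polynomial bound on the number of minimal separators. The no-clique-cutset assumption is used here to prevent a recursive decomposition in which each piece contributes minimal separators independently, blowing up the count multiplicatively.

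The main obstacle, and likely the source of the difficulty of \cref{conj}, is the second step. Even with strong local restrictions, components of $V(G)\setminus V(H)$ can be nested, and a single minimal separator can cross several arcs of $H$ once one accounts for the interior of the graph. Making the global count precise seems to require a clean structural theorem for $(K_{2,3}\text{-induced-minor},\text{clique-cutset})$-free graphs containing a long hole, in the spirit of the structural theorems for theta-free or wheel-free graphs discussed in \cite{MR4089569,MR4089571}. Obtaining such a theorem is where I would expect the bulk of the work to lie.
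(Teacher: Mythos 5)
The statement you are attempting is stated in the paper as an open conjecture; the authors give no proof of it, and they explicitly flag it as a direction for future work (an alternative recognition algorithm would follow if it were true). So there is no proof in the paper to compare against, and your text should be judged on its own merits as a proof attempt.

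As such, it has a genuine gap, and you identify it yourself: the entire content of the conjecture lies in your ``second step,'' which you do not carry out. Your first step (local attachment analysis) is plausible and consistent with the tools in the paper --- \cref{t:k23T} does give you freedom from long prisms, pyramids, thetas, and broken wheels, and \cref{l:nw} in the paper is essentially the kind of statement you describe about $N_H(u)$ inducing a short path on the rim --- but note that \cref{l:nw} is proved only for an \emph{optimal} wheel-frame pair and for vertices nonadjacent to $a,b,c,d$, so even the local picture requires care for an arbitrary long hole $H$ and arbitrary outside vertices. The step from ``each component of $G\setminus V(H)$ attaches locally'' to ``$G$ has polynomially many minimal separators'' is not a routine counting argument: minimal separators need not be confined to $N[V(H)]$, components attaching to the same arc can be nested or interleaved in ways that multiply the count, and the paper's own example $G_k$ (which has no clique cutset, excludes $K_{2,3}$ as an induced minor, and has superpolynomially many minimal separators while having only short holes) shows how delicately the hypothesis ``some hole of length at least $k$'' must be used --- it cannot be weakened to a bound on the independence number. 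Until you supply a structural theorem that actually pins down all minimal separators in terms of boundedly many arcs of $H$, this remains a research plan rather than a proof, which is consistent with the authors leaving the statement as \cref{conj}.
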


If \cref{conj} is true, it would give an alternative polynomial-time recognition algorithm for determining whether a given graph contains $K_{2, 3}$ as an induced minor.
Indeed, if a graph $G$ has a polynomial number of minimal separators, then they can all be listed in polynomial time~\cite{MR1792122}.
On the other hand, if $G$ does not contain cycles of length at least $k$, then the list of induced subgraphs from \cref{t:k23T} becomes finite, and thus detecting their presence can be done with a brute-force approach in polynomial-time.

It is not difficult to construct examples showing that the condition that $G$ does not contain any hole of length at least $k$ cannot be replaced with the condition that $G$ has independence number at most $k$.
For an integer $k \geq 2$, let $G_k$ be the graph with $V(G_k) = A\cup B\cup C\cup D$ where $A$, $B$, $C$, and $D$ are pairwise disjoint sets with $k$ vertices each, $A = \{a_1,\ldots, a_k\}$ is an independent set, each of $B = \{b_1,\ldots, b_k\}$, $C = \{c_1,\ldots, c_k\}$, and $D = \{d_1,\ldots, d_k\}$ is a clique, every vertex in $A$ is adjacent to every vertex of $C$, every vertex in $B$ is adjacent to every vertex of $D$, for every $i,j\in \{1,\ldots, k\}$, vertices $a_i$ and $b_j$ are adjacent if and only if $i = j$, vertices $b_i$ and $c_j$ are adjacent if and only if $i \neq j$, vertices $c_i$ and $d_j$ are adjacent if and only if $i = j$, and there are no other edges.
Then, $A\cup \{d_1\}$ is an independent set in $G_k$ with cardinality $k+1$, and it can be verified that $G_k$ has no clique cutset, does not contain $K_{2,3}$ as an induced minor (this can be checked using \cref{t:k23T} or by observing that no minimal separator includes an independent of size $3$), and the number of minimal separators of graphs in the family $\{G_k\colon k\ge 2\}$ is not bounded by any polynomial function in the number of vertices (the graph $G[C \cup D]$, sometimes referred to as the (short) \emph{$k$-prism}, has at least $2^{k-2}$ number of minimal separators).

\section*{Acknowledgements}

  We are grateful to the anonymous reviewers for their helpful remarks.
Special thanks to Sebastian Wiederrecht for insightful discussions and for suggesting the idea that led us to \cref{lem algo}.

\printbibliography

\end{document}